\newtheorem{theorem}{Theorem}[section]
\newtheorem{lemma}[theorem]{Lemma}
\newtheorem{proposition}[theorem]{Proposition}
\newtheorem{corollary}[theorem]{Corollary}
\theoremstyle{definition}
\newtheorem{definition}[theorem]{Definition}
\newtheorem{remark}[theorem]{Remark}
\numberwithin{equation}{section}
\def\co{\colon\thinspace}
\newcommand\R {{\mathbb R}} 
\newcommand\Z {{\mathbb Z}}
\newcommand\sys{{\rm Sys}} 
\newcommand\area{{\rm Area}} 
\newcommand\vol{{\rm Vol}}
\newcommand\length{{\rm length}}
\newcommand{\cf}{{\it cf.}}
\newcommand{\wh}{\widehat}
\long\def\forget#1\forgotten{}
\begin{document}
\title{Relative systoles of relative-essential~$2$--complexes}

\author[K. Katz]{Karin Usadi Katz}

\author[M.~Katz]{Mikhail G. Katz$^{*}$}

\thanks{$^{*}$Supported by the Binational Science Foundation (grant
2006393)}

\address{Department of Mathematics, Bar Ilan University, Ramat Gan
52900 Israel} 

\email{katzmik@macs.biu.ac.il}

\author[S.~Sabourau]{St\'ephane Sabourau}

\address{Laboratoire de Math\'ematiques et Physique Th\'eorique,
Universit\'e Fran\c{c}ois-Rabelais Tours, F\'ed\'eration Denis Poisson -- CNRS,
Parc de Grandmont, 37200 Tours, France}

\email{sabourau@lmpt.univ-tours.fr}

\author[S.~Shnider]{Steven Shnider}

\address{Department of Mathematics, Bar Ilan University, Ramat Gan
52900 Israel}

\email{shnider@macs.biu.ac.il}

\author[S.~Weinberger]{Shmuel Weinberger$^{**}$}

\thanks{$^{**}$Partially supported by NSF grant DMS 0504721 and the
BSF (grant 2006393)}

\address{Department of Mathematics, University of Chicago, Chicago, IL
60637}

\email{shmuel@math.uchicago.edu}

\subjclass[2000]{Primary 53C23, 57M20;
Secondary 57N65}

\keywords{coarea formula, cohomology of cyclic groups, essential
complexes, Grushko's theorem, Poincar\'e duality, systole,
systolic ratio}

\date{\today}

\begin{abstract}
We prove a systolic inequality for a~$\phi$--relative systole of
a~$\phi$--essential~$2$--complex~$X$, where~$\phi \co \pi_1(X) \to G$
is a homomorphism to a finitely presented group~$G$.  Thus, we show
that universally for any~$\phi$--essential
Riemannian~$2$--complex~$X$, and any~$G$, the following inequality is
satisfied:~$\sys(X, \phi)^2 \leq 8 \, \area(X)$.  Combining our
results with a method of L.~Guth, we obtain new quantitative results
for certain~$3$--manifolds: in particular for the Poincar\'e homology
sphere~$\Sigma$, we have~$\sys(\Sigma)^3 \leq 24 \, \vol(\Sigma)$.
\end{abstract}

\maketitle 

\tableofcontents

\forget
Nothing appears in the text!
\forgotten

\section{Relative systoles}

We prove a systolic inequality for a~$\phi$--relative systole of a
\mbox{$\phi$--essential}
$2$--complex~$X$, where~$\phi \co \pi_1(X) \to G$ is a homomorphism to a
finitely presented group~$G$.  Thus, we show that universally for
any~$\phi$--essential Riemannian~$2$--complex~$X$, and any~$G$, we
have~$\sys(X, \phi)^2 \leq 8 \, \area(X)$.  Combining our results with
a method of L.~Guth, we obtain new quantitative results for
certain~$3$--manifolds: in particular for the Poincar\'e
homology sphere~$\Sigma$, we have~$\sys(\Sigma)^3 \leq 24 \, \vol(\Sigma)$.  To
state the results more precisely, we need the following definition.

Let~$X$ be a finite connected~$2$--complex.  Let~$\phi \co \pi_{1}(X) \to
G$ be a group homomorphism.  Recall that~$\phi$ induces a classifying
map (defined up to homotopy)~$X \to K(G,1)$.

\begin{definition}
The complex~$X$ is called~$\phi$--{\em essential\/} if the classifying
map~$X \to K(G,1)$ cannot be homotoped into the~$1$--skeleton
of~$K(G,1)$.
\end{definition}


\begin{definition}
Given a piecewise smooth Riemannian metric on~$X$, the~$\phi$--relative
systole of~$X$, denoted~$\sys(X,\phi)$, is the least length of a loop
of~$X$ whose free homotopy class is mapped by~$\phi$ to a nontrivial
class.
\end{definition}

When~$\phi$ is the identity homomorphism of the fundamental group, the
relative systole is simply called the systole, and denoted~$\sys(X)$.

\begin{definition} 
\label{def:sigma}
The~$\phi$--systolic area~$\sigma_\phi(X)$ of~$X$ is defined as
\begin{equation*}
\sigma_{\phi}(X) = \frac{\area(X)}{\sys(X,\phi)^{2}}.
\end{equation*}
Furthermore, we set
\begin{equation*}
\sigma_{*}(G) = \inf_{X, \phi} \sigma_{\phi}(X),
\end{equation*}
where the infimum is over all~$\phi$--essential piecewise Riemannian
finite connected \mbox{$2$--complexes}~$X$, and homomorphisms~$\phi$
with values in~$G$.
\end{definition}

In the present text, we prove a systolic inequality for
the~$\phi$--relative systole of a~$\phi$--essential~$2$--complex~$X$.
More precisely, in the spirit of Guth's text \cite{Gu09}, we prove a
stronger, {\em local\/} version of such an inequality, for almost
extremal complexes with minimal first Betti number.  Namely, if~$X$
has a minimal first Betti number among all~$\phi$--essential piecewise
Riemannian \mbox{$2$--complexes} satisfying~$\sigma_{\phi}(X) \leq
\sigma_*(G) +\varepsilon$ for an~$\varepsilon>0$, then the area of a
suitable disk of~$X$ is comparable to the area of a Euclidean disk of
the same radius, in the sense of the following result.

\begin{theorem} 
\label{13}
Let~$\varepsilon >0$.  Suppose~$X$ has a minimal first Betti number
among all~$\phi$--essential piecewise Riemannian
$2$--complexes satisfying~$\sigma_{\phi}(X) \leq \sigma_*(G)
+\varepsilon$.  Then each ball centered at a point~$x$ on
a~$\phi$--systolic loop in~$X$ satisfies the area lower bound
\begin{equation*}
\area \, B(x,r) \geq
\frac{\left(r-\varepsilon^{1/3}\right)^2}{2+\varepsilon^{1/3}}
\end{equation*}
whenever~$r$ satisfies~$\varepsilon^{1/3} \leq r \leq
\frac{1}{2}\sys(X,\phi)$.
\end{theorem}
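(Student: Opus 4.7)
My plan is to argue by contradiction, using the minimality of $b_1(X)$ together with a cut-and-cap surgery in the spirit of Guth's approach in~\cite{Gu09}. Assume the inequality fails at some admissible radius~$r$, so that $\area B(x,r) < \frac{(r-\varepsilon^{1/3})^2}{2+\varepsilon^{1/3}}$. Set $A(t) = \area B(x,t)$; the coarea inequality gives $A'(t) \geq \length(\partial B(x,t))$ for almost every~$t$. Averaging over $t \in [\varepsilon^{1/3}, r]$ then produces a radius~$t_0$ in this interval with
\begin{equation*}
\length(\partial B(x,t_0)) \;\leq\; \frac{A(r)}{r-\varepsilon^{1/3}} \;<\; \frac{r-\varepsilon^{1/3}}{2+\varepsilon^{1/3}}.
\end{equation*}

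At this radius I would surger~$X$ by cutting along the 1-cycle $\partial B(x,t_0)$, excising the ball $B(x,t_0)$, and capping off each boundary component with a piecewise flat cone of height~$t_0$; call the resulting complex~$X'$. The total area of the capping cones is at most $\frac{1}{2} t_0 \length(\partial B(x,t_0))$. The argument now splits according to whether any component of $\partial B(x,t_0)$ carries a nontrivial $\phi$-class.

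In the first case, such a component is already a loop in~$X$ of length at most $\length(\partial B(x,t_0))$, which by the bound above and the restriction $r \leq \frac{1}{2}\sys(X,\phi)$ is strictly less than $\sys(X,\phi)$, directly contradicting the definition of the $\phi$-relative systole. In the second case, every boundary component is $\phi$-trivial, so the classifying map $X \to K(G,1)$ extends over the capping disks; hence~$X'$ is $\phi$-essential. One checks that any short loop in~$X'$ can be pushed off the cones into~$X$ without increasing its length, giving $\sys(X',\phi) \geq \sys(X,\phi)$; combined with the cone area estimate this yields $\sigma_\phi(X') \leq \sigma_*(G)+\varepsilon$. Since the surgery strictly lowers~$b_1$, this contradicts the minimality hypothesis on~$X$ and closes the argument.

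The main technical obstacle is the bookkeeping in the second case. Two points require care: first, that the surgery does produce a strict decrease in the first Betti number (not merely a non-increase), which relies on the ball $B(x,t_0)$ contributing a generator to $H_1(X)$ that is killed by the capping; and second, that the replacement cones create no shorter $\phi$-essential loops, so that $\sys(X',\phi) \geq \sys(X,\phi)$ really holds. The exponent~$1/3$ in $\varepsilon^{1/3}$ is chosen so that the averaging window $[\varepsilon^{1/3}, r]$ is wide enough to force a short boundary circle while the resulting length bound leaves enough slack to absorb the $\varepsilon$-perturbation of $\sigma_\phi$; the optimal balance is reached when both corrections are of order $\varepsilon^{1/3}$.
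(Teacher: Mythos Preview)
Your proposal has the right overall architecture (surgery plus the minimal-$b_1$ hypothesis to force a contradiction), but the mechanism you use to locate the surgery radius is different from the paper's and, as written, leaves three genuine gaps that your averaging step does not close.

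\medskip

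\textbf{(a) Area of the surgered complex.} You need $\sigma_\psi(X')\le \sigma_*(G)+\varepsilon$, hence (after normalizing the systole) $\area X'\le \area X$, i.e.\ the cone area does not exceed $A(t_0)=\area B(x,t_0)$. Your averaging bounds $L(t_0)=\length\,\partial B(x,t_0)$ from above, but gives no lower bound on $A(t_0)$, so the inequality $\tfrac12 t_0 L(t_0)\le A(t_0)$ is unproved. The paper avoids this by a dichotomy rather than averaging: either $A(t)\le \lambda\,L(t)^2$ for a.e.\ $t\in(\delta,\tfrac12)$, in which case one integrates $(\sqrt{A})'\ge \tfrac{1}{2\sqrt{\lambda}}$ to get the desired bound; or there is a ``fat'' radius $r_0$ with $A(r_0)>\lambda\,L(r_0)^2$, $\lambda>\tfrac12$, and then the buffer-cone area $\tfrac12 L(r_0)^2$ is automatically smaller than $A(r_0)$. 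This fat-ball condition is exactly what makes the area bookkeeping go through; a single short level curve found by averaging does not.

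\textbf{(b) Systole preservation.} On a piecewise flat cone of slant height $t_0$ over a circle, the radial projection to the base \emph{increases} length (a chord is shorter than its arc), so ``push short loops off the cones without increasing length'' fails for your construction. The paper uses instead the quotient of the product metric on $S_i\times[0,L/2]$ (height $L/2$, not $t_0$), for which projection to the base is $1$--Lipschitz; the cone $CA$ over the apex points has edges of length $\sys(X,\phi)$, so no loop shorter than the systole can reach it. Note also that with the product-quotient metric the buffer area is $\tfrac12 L^2$, not $\tfrac12 t_0 L$.

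\textbf{(c) Strict drop in $b_1$.} You flag this yourself, and it is the crux. Merely excising a ball and capping boundary circles need not lower $b_1$ (take $B$ a disk with connected boundary). In the paper, once the fat-ball case gives $\sigma_\psi(Y)\le\sigma_\phi(X)$, the near-minimality bound $\sigma_\phi(X)\le\sigma_*(G)+\varepsilon$ combined with $\varepsilon<4(\lambda-\tfrac12)\delta^2$ forces $\length S<2r_0$. Since the $\phi$--systolic loop through $x$ has an arc of length $\ge 2r_0$ inside $B$, its two exit points must lie in \emph{distinct} components of $S$; hence the cone $CA$ has at least two edges joined by an arc in $X\setminus B$, so removing one edge yields $Y\simeq Y'\vee S^1$ and $b_1(Y')<b_1(Y)\le b_1(X)$. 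Your averaging gives no relation like $L(t_0)<2t_0$, so this mechanism is unavailable.

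\medskip

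In short, replacing the paper's differential-inequality dichotomy by a mean-value selection of $t_0$ loses precisely the two quantitative relations that drive the argument: $A>\lambda L^2$ (for the area comparison) and $L<2r_0$ (for the strict $b_1$ drop). The choice $\delta=\varepsilon^{1/3}$ in the paper comes from setting $\lambda=\tfrac12+\tfrac{\varepsilon}{4\delta^2}$ and reading off $\tfrac{1}{4\lambda}=\tfrac{1}{2+\varepsilon/\delta^2}$, not from balancing an averaging window.
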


A more detailed statement appears in~Proposition~\ref{prop:minB}.  The
theorem immediately implies the following systolic inequality.

\begin{corollary} 
\label{coro:A}
Every finitely presented group~$G$ satisfies
\begin{equation*}
\sigma_*(G) \geq \frac{1}{8}, 
\end{equation*}
so that every piecewise Riemannian~$\phi$--essential~$2$--complex~$X$
satisfies the inequality
\begin{equation*}
\sys(X,\phi)^{2} \leq 8 \, \area(X).
\end{equation*}
\end{corollary}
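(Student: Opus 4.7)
The plan is to derive Corollary \ref{coro:A} directly from Theorem \ref{13} by choosing the optimal radius in the local area estimate and letting $\varepsilon \to 0$. If no $\phi$--essential $2$--complex with values in $G$ exists, then $\sigma_*(G) = +\infty$ and the inequality is vacuous, so I assume henceforth that the class is nonempty.

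Fix an arbitrary $\varepsilon > 0$ small enough that $\varepsilon^{1/3} < \tfrac{1}{2}$. By the definition of $\sigma_*(G)$ as an infimum, I can find a $\phi$--essential piecewise Riemannian $2$--complex $Y$ with $\sigma_\phi(Y) \leq \sigma_*(G) + \varepsilon$. Among all $\phi$--essential piecewise Riemannian $2$--complexes satisfying $\sigma_\phi \leq \sigma_*(G) + \varepsilon$, the first Betti numbers form a nonempty subset of $\mathbb{N}$, hence admit a minimum. Pick an $X$ realizing this minimum, so that Theorem~\ref{13} applies. Using the scale invariance of $\sigma_\phi$, I may rescale the metric so that $\sys(X,\phi) = 1$; my smallness assumption on $\varepsilon$ guarantees $\varepsilon^{1/3} \leq \tfrac{1}{2} = \tfrac{1}{2}\sys(X,\phi)$, so the hypothesis on $r$ in Theorem~\ref{13} is satisfied for $r = \tfrac{1}{2}$.

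Applying the theorem at a point $x$ on a $\phi$--systolic loop with $r = \tfrac{1}{2}\sys(X,\phi) = \tfrac{1}{2}$ yields
\begin{equation*}
\area(X) \;\geq\; \area\, B(x, \tfrac{1}{2}) \;\geq\; \frac{\bigl(\tfrac{1}{2} - \varepsilon^{1/3}\bigr)^2}{2 + \varepsilon^{1/3}}.
\end{equation*}
Since $\sys(X,\phi) = 1$ after rescaling, the left-hand side equals $\sigma_\phi(X)$, so by our choice of $X$,
\begin{equation*}
\sigma_*(G) + \varepsilon \;\geq\; \sigma_\phi(X) \;\geq\; \frac{\bigl(\tfrac{1}{2} - \varepsilon^{1/3}\bigr)^2}{2 + \varepsilon^{1/3}}.
\end{equation*}
Letting $\varepsilon \to 0^+$, the right-hand side tends to $\tfrac{(1/2)^2}{2} = \tfrac{1}{8}$, which gives $\sigma_*(G) \geq \tfrac{1}{8}$. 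The systolic inequality $\sys(X,\phi)^2 \leq 8\,\area(X)$ for arbitrary $\phi$--essential $X$ is then immediate from the definition of $\sigma_*(G)$.

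The argument is essentially a one-line deduction from Theorem~\ref{13}; the only subtle points are the rescaling (to put the systole at $1$ so the critical radius $r = \tfrac{1}{2}$ is admissible) and the existence of a Betti-number-minimizing $X$ in the relevant quasi-minimizing class, which follows from the well-ordering of $\mathbb{N}$. The genuine difficulty of the paper lies in Theorem~\ref{13} itself, not here; Corollary~\ref{coro:A} is its immediate formal consequence.
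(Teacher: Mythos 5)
Your proposal is correct and follows the same route the paper intends: the paper states that both Theorem~\ref{13} and Corollary~\ref{coro:A} are consequences of Proposition~\ref{prop:minB}, where Theorem~\ref{13} is exactly the specialization $\delta=\varepsilon^{1/3}$, and the ``in particular'' clause of that proposition is precisely the bound $\sigma_*(G)\geq\frac18$ whose derivation you spell out. Your argument --- normalize so $\sys(X,\phi)=1$, observe that the set of Betti numbers of $\varepsilon$--almost minimizers is a nonempty subset of $\mathbb N$ and hence has a least element, apply the area bound at $r=\tfrac12$, compare $\sigma_\phi(X)=\area(X)$ with $\sigma_*(G)+\varepsilon$, and let $\varepsilon\to 0^+$ --- is exactly the step the paper leaves implicit, and you correctly handle the degenerate case $\sigma_*(G)=+\infty$ at the outset.
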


In the case of the absolute systole, we prove a similar lower bound
with a Euclidean exponent for the area of a suitable disk, when the
radius is smaller than half the systole, without the assumption of
near-minimality.  Namely, we will prove the following theorem.

\begin{theorem} 
\label{theo:B}
Every piecewise Riemannian essential~$2$--complex~$X$ admits a
point~$x\in X$ such that the area of the~$r$--ball centered at~$x$ is
at least~$r^2$, that is,
\begin{equation}
\label{11c}
\area ( B(x,r)) \geq r^2,
\end{equation}
for all~$r \leq \frac{1}{2} \sys(X)$.
\end{theorem}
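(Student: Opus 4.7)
The plan is to choose $x$ to lie on a shortest non-contractible loop of $X$ and to derive the claimed area estimate from a pointwise length lower bound on concentric spheres around $x$, applied through the coarea formula.

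Let $\gamma \co \R/L\Z \to X$ be a systolic loop (which exists by standard compactness, $X$ being essential), parametrized by arclength, with $L = \sys(X)$, and set $x = \gamma(0)$. The preliminary step is to prove the distance identity
\[
d(x, \gamma(t)) = \min(t,\, L-t) \qquad \text{for every } t \in [0, L].
\]
This follows from a loop-splitting argument: any chord $\sigma$ from $x$ to $\gamma(t)$ of length strictly less than $t$ would factor $\gamma$ (based at $x$) as the product of two loops $\gamma|_{[0,t]} \cdot \sigma^{-1}$ and $\sigma \cdot \gamma|_{[t, L]}$, each of length strictly less than $L$ and hence contractible by definition of the systole, forcing $\gamma$ itself to be contractible, a contradiction. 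In particular, for every $s \in (0, L/2)$ the two points $\gamma(s)$ and $\gamma(L-s)$ both lie on the sphere $\partial B(x, s)$.

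Next I would invoke the coarea formula, valid on a piecewise Riemannian $2$-complex because the distance $f(y) = d(x, y)$ is $1$-Lipschitz with $|\nabla f| = 1$ almost everywhere, to rewrite
\[
\area \, B(x, r) = \int_0^r \length \bigl( \partial B(x, s) \bigr) \, ds.
\]
The heart of the matter is the pointwise sphere-length estimate $\length(\partial B(x, s)) \geq 2s$ for almost every $s \in (0, L/2)$, after which integration directly yields $\area \, B(x, r) \geq r^2$. The natural attempt is to join $\gamma(s)$ and $\gamma(L-s)$ by an arc $\delta$ lying in $\partial B(x, s)$; then its length satisfies $|\delta| \geq d(\gamma(s), \gamma(L-s))$, and a loop-decomposition argument combining $\delta$ with either of the two arcs of $\gamma$ between these points, together with the systolic bound, gives $d(\gamma(s), \gamma(L-s)) = 2s$ at least in the range $s \leq L/4$.

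The main obstacle is extending the sphere-length bound to the full range $s \in (0, L/2)$, since the naive argument above degenerates for $s > L/4$: it only yields $d(\gamma(s), \gamma(L-s)) \geq L - 2s$, which collapses to zero as $s \to L/2$. Overcoming this will require a more refined analysis, plausibly exploiting the separating role of $\partial B(x, s)$ in $X$, the additional intersections of $\gamma$ with the sphere for larger $s$, and a careful summation over the (possibly multiple) components of $\partial B(x, s)$ that meet $\gamma$. Once the full-range estimate $\length(\partial B(x, s)) \geq 2s$ is secured, Theorem~\ref{theo:B} follows by integrating over $(0, r)$.
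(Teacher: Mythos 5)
Your outline correctly identifies the coarea strategy and is candid that the pointwise sphere-length bound is not secured, but it misses both ideas that close the gap, so the proposal does not work as written. The first missing idea fixes the $s > L/4$ range: rather than estimating $|\delta| \geq d(\gamma(s), \gamma(L-s))$, which as you note degenerates, bound $|\delta|$ directly by observing that the loop $\delta \cup \gamma|_{[-s,s]}$ lies in $\overline{B(x,s)}$, hence in a ball of radius less than $\frac{1}{2}\sys(X)$, and is therefore null-homotopic by Lemma~\ref{lem:trivial}; the complementary loop $\delta \cup \gamma|_{[s,\, L-s]}$ is then noncontractible, of length at least $L$, and $|\delta| \geq 2s$ follows for all $s < L/2$, not merely $s \leq L/4$.

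The second, deeper obstacle is that $\gamma(s)$ and $\gamma(L-s)$ need not lie in the same connected component of $\partial B(x,s)$: on a general $2$--complex the level curve can disconnect, and then no arc $\delta$ exists at all. Your closing paragraph gestures at ``possibly multiple components'' but offers no mechanism to control them, and this cannot be patched by a naive summation. The paper's resolution is to pass to a cover. Write the Grushko decomposition $\pi_1(X) = G_1 * \cdots * G_r * F$ with each $G_i$ nontrivial, freely indecomposable and not isomorphic to $\Z$; take $\gamma$ to be a shortest loop representing a nontrivial class in the conjugacy class $[G_1]$, and let $\overline{X}$ be the cover with $\pi_1(\overline{X}) = G_1$. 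Then $\sys(\overline{X}) = \length(\gamma)$, and, crucially, level curves in $\overline{X}$ are connected, because a disconnected level curve would split off a free $\Z$--factor of $G_1$ (by a result of~\cite{KRS}), contradicting indecomposability. The argument of the previous paragraph is carried out upstairs in $\overline{X}$, and for $r \leq \frac{1}{2}\sys(X) \leq \frac{1}{2}\sys(\overline{X})$ the ball $B(\bar x, r)$ projects injectively to $X$, completing the proof. Without the Grushko/cover step, which your proposal does not anticipate, the sphere-length bound has no reason to hold.
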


We conjecture a bound analogous to \eqref{11c} for the area of a
suitable disk of a \mbox{$\phi$--essential}~$2$--complex~$X$, with
the~$\phi$--relative systole replacing the systole, \cf~the GG-property
below.  The application we have in mind is in the case
when~$\phi \co \pi_1(X)\to \Z_p$ is a homomorphism from the fundamental
group of~$X$ to a finite cyclic group.  Note that the conjecture is
true in the case when~$\phi$ is a homomorphism to~$\Z_2$, by Guth's
result \cite{Gu09}.

\begin{definition}[GG-property%
\footnote{GG-property stands for the property analyzed by M.~Gromov
and L.~Guth}
] 
\label{def:GG}
%
%
Let~$C>0$.  Let~$X$ be a finite connected~$2$--complex,
and~$\phi \co \pi_1(X) \to G$, a group homomorphism.  We say that~$X$ has
the~$\rm{GG}_{C}$-property for~$\phi$ if
%
%
every piecewise smooth Riemannian metric on~$X$ admits a point~$x \in
X$ such that the~$r$--ball of~$X$ centered at~$x$ satisfies the bound
\begin{equation} 
\label{eq:ball}
\area \, B(x,r) \geq C r^2,
\end{equation}
for every~$r \leq \frac{1}{2} \sys(X,\phi)$.
\end{definition}

Note that if the~$2$--complex~$X$ is~$\varepsilon$--almost minimal,
i.e., satisfies the bound $\sigma_{\phi}(X) \leq G_*(G) +
\varepsilon$, and has least first Betti number among all such
complexes, then it satisfies~\eqref{eq:ball} for some~$C>0$ and
for~$r\geq \varepsilon^{1/3}$ by Theorem~\ref{13}.

Modulo such a conjectured bound, we prove a systolic inequality for
closed~$3$--manifolds with finite fundamental group.

\begin{theorem}
\label{theo:main}
Let~$p\geq 2$ be a prime.  Assume that
every~$\phi$--essential~$2$--complex has the~$\rm{GG}_C$-property
\eqref{eq:ball} for each homomorphism~$\phi$ into~$\Z_p$ and for some
universal constant~$C>0$.  Then every orientable closed
Riemannian~$3$--manifold~$M$ with finite fundamental group of order
divisible by~$p$, satisfies the bound
\begin{equation*}
\sys(M)^3 \leq 24 \, C^{-1} \; \vol(M).
\end{equation*}
More precisely, there is a point~$x\in M$ such that the volume of
every~$r$--ball centered at~$x$ is at least~$\frac{C}{3}r^{3}$, for all
$r \leq \frac{1}{2} \sys(M)$.
\end{theorem}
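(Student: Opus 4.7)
The plan is to adapt Guth's slicing argument for $3$-manifolds \cite{Gu09} to the $\Z_p$ setting, with the hypothesized GG-property serving as the analytic input that replaces Guth's absolute-systole area bound. The argument has three main steps: (a)~the construction, from $M$, of a $2$-complex $X$ equipped with a homomorphism $\phi\co \pi_1(X) \to \Z_p$ making $X$ into a $\phi$-essential complex with $\sys(X,\phi) \geq \sys(M)$; (b)~application of the GG-property to produce a point $x\in X$ with $\area(B_X(x,r)) \geq C r^2$ for all $r \leq \tfrac12 \sys(X,\phi)$; and (c)~the coarea inequality, converting the area bound on $X$ into a volume bound on the ambient $M$.

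For step~(a), the input $p \mid |\pi_1(M)|$ supplies, via Cauchy's theorem, an injection $\Z_p \hookrightarrow \pi_1(M)$, and with it a nontrivial mod-$p$ cohomology class of $\pi_1(M)$. I would represent this class by a map $F\co M \to Y$, where $Y$ is a suitable low-dimensional model related to $K(\Z_p,1)$, and define $X = F^{-1}(Y^{(1)})$ after a generic perturbation. The resulting $X$ is a $2$-complex inside $M$ that inherits a natural map to $Y^{(1)}$, from which the homomorphism $\phi\co \pi_1(X) \to \Z_p$ is extracted. Both the $\phi$-essentiality of $X$ and the inequality $\sys(X,\phi)\geq \sys(M)$ are to be derived from the construction: the former reflects the nontriviality of the pulled-back class, while the latter uses that $\phi$-nontrivial loops of $X$ represent classes of $\pi_1(M)$ that are themselves nontrivial, and hence of length at least $\sys(M)$.

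Steps~(b) and~(c) then proceed in standard fashion. The GG-property gives $\area(B_X(x,r)) \geq C r^2$ for all $r \leq \tfrac12\sys(X,\phi)$, which by step~(a) covers the range $r \leq \tfrac12\sys(M)$. Applying the coarea inequality to the distance-from-$x$ function on $M$, and using that $X$ is locally a transverse $2$-complex in $M$, yields
\begin{equation*}
\vol\bigl(B_M(x,r)\bigr) \;\geq\; \int_0^r \area\bigl(B_X(x,s)\bigr)\,ds \;\geq\; \int_0^r C s^2\,ds \;=\; \frac{C}{3}\, r^3,
\end{equation*}
for all $r \leq \tfrac12 \sys(M)$. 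Setting $r = \tfrac12\sys(M)$ gives $\vol(M) \geq \frac{C}{24} \sys(M)^3$, equivalent to the stated bound $\sys(M)^3 \leq 24\,C^{-1}\,\vol(M)$.

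The main obstacle is step~(a), specifically the guarantee $\sys(X,\phi) \geq \sys(M)$, because in general $\pi_1(M)$ admits no homomorphism onto $\Z_p$ at all. For instance, the abelianization of $\pi_1$ of the Poincar\'e sphere is trivial, so there is no $\Z_p$-quotient of $\pi_1(M)$ through which $\phi$ could factor. The resolution requires either constructing $X$ so that its $\phi$-nontrivial loops are \emph{geometrically} forced to represent nontrivial elements of $\pi_1(M)$ via the interaction of $X$ with the ambient $3$-manifold topology, or performing the argument equivariantly in a suitable cover of $M$ where a $\Z_p$-quotient of the fundamental group does exist, with appropriate care in transferring the final volume estimate back to $M$ without losing the covering multiplicity.
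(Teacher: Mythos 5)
Your outline captures the correct shape (find a $\phi$-essential $2$-complex $X$ coming from $M$, apply the GG-property, convert to a $3$-volume bound), and you correctly flag the obstruction that $\pi_1(M)$ typically has no $\Z_p$-quotient; the paper resolves this exactly as you suspect, by passing to a cover $N$ of $M$ with $\pi_1(N)\cong\Z_p$ (which exists by Cauchy's theorem) and noting that any ball of radius $<\tfrac12\sys(M)$ projects injectively from $N$ to $M$, so no covering multiplicity is lost. However, there are two substantive gaps in the core of your argument.

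First, your step~(c) asserts
$\vol(B_M(x,r)) \geq \int_0^r \area(B_X(x,s))\,ds$
and calls it the coarea inequality, but coarea for the distance function on $M$ only gives $\vol(B_M(x,r)) = \int_0^r \area(S_M(x,s))\,ds$, where $S_M(x,s)$ is the distance sphere in $M$. There is no a priori relation between $\area(S_M(x,s))$ and the area of a ball in a transverse $2$-complex $X\subset M$; the inequality you need does not follow from transversality. The paper's proof supplies the missing mechanism, and it is the crux of the whole argument: $X$ is taken to be the support $D_0$ of a $\Z_p$-coefficient $2$-cycle $D$ that is area-minimizing up to $\varepsilon$ in its nonzero class of $H_2(N;\Z_p)$, using a notion of area that ignores the $\Z_p$-multiplicities (Definition~\ref{def:Vol}, flagged in Remark~\ref{rem:compare} as a crucial feature). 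Lemma~\ref{lem:DB}, via Lefschetz--Poincar\'e duality combined with Lemma~\ref{lem:trivial}, shows that $D\cap\bar B$ is nullhomologous in $H_2(B,\partial B;\Z_p)$, so it can be replaced by a chain $\mathcal C$ lying in the distance sphere $S(x,R)$. Minimality then gives $\area(D_0\cap B) - \varepsilon \leq \area(\mathcal C) \leq \area(S(x,R))$, where the last inequality is exactly what requires multiplicities to be discarded. Only then does the GG-property estimate $\area(D_0\cap B)\geq CR^2$ transfer into the lower bound $\area(S(x,R))\geq CR^2-\varepsilon$ that you can integrate.

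Second, and consequently, your step~(a) construction of $X$ by taking a generic preimage $F^{-1}(Y^{(1)})$ is not the paper's and, more importantly, does not equip $X$ with the minimality property that drives step~(c). The $\phi$-essentialness of $D_0$ and the inequality $\sys(D_0,\phi)\geq\sys(N)\geq\sys(M)$ come out of the classifying map of $N$ and Proposition~\ref{33}, not from a preimage construction. Without a minimizing cycle you have no way to push $D\cap\bar B$ to the sphere and compare areas, so the argument cannot be closed as written.
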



A slightly weaker bound can be obtained modulo a weaker GG-property,
where the point~$x$ is allowed to depend on the radius~$r$.

Since the GG-property is available for~$p=2$ and~$C=1$ by Guth's article
\cite{Gu09}, we obtain the following corollary.

\begin{corollary}
Every closed Riemannian~$3$--manifold~$M$ with fundamental group of
even order satisfies
\begin{equation}
\label{Poincare}
\sys(M)^3 \leq 24 \; \vol(M).
\end{equation}
\end{corollary}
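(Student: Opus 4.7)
The plan is to derive the corollary as a direct specialization of Theorem~\ref{theo:main} at the prime $p = 2$, where the required GG-hypothesis is supplied by an existing theorem of L.~Guth rather than left as an assumption.

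\textbf{Step 1: Verifying the GG-property for $p = 2$.} I would invoke the main result of \cite{Gu09}: for every $\phi$-essential piecewise Riemannian $2$-complex $X$ and every homomorphism $\phi \co \pi_1(X) \to \Z_2$, Guth produces a point $x \in X$ at which $\area B(x,r) \geq r^2$ for all $r \leq \tfrac{1}{2}\sys(X,\phi)$. After a brief translation of notation, this is precisely the $\rm{GG}_C$-property of Definition~\ref{def:GG} with the explicit constant $C = 1$. The only care required here is to confirm that Guth's conclusion is genuinely uniform over all such $X$ and $\phi$, which indeed it is.

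\textbf{Step 2: Applying Theorem~\ref{theo:main}.} With $(p, C) = (2, 1)$ the hypothesis of Theorem~\ref{theo:main} is discharged by Step~1. Let $M$ be a closed Riemannian $3$-manifold with fundamental group of even order: then $\pi_1(M)$ is finite with order divisible by $2$, and $M$ is orientable (this is automatic for closed $3$-manifolds with finite fundamental group, via the elliptization theorem, so the orientability hypothesis of Theorem~\ref{theo:main} is satisfied free of charge). Substituting $C = 1$ into the conclusion of Theorem~\ref{theo:main} yields
\[
\sys(M)^3 \leq 24 \, C^{-1} \, \vol(M) = 24 \, \vol(M),
\]
which is exactly \eqref{Poincare}. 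In fact the sharper, pointwise conclusion of Theorem~\ref{theo:main} also upgrades to $\vol B(x,r) \geq \tfrac{1}{3} r^3$ for some $x \in M$ and all $r \leq \tfrac{1}{2}\sys(M)$.

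The argument is essentially a one-line substitution; the only real content is the appeal to \cite{Gu09} in Step~1, and the only potential obstacle is the purely bookkeeping matter of confirming that Guth's precise phrasing matches Definition~\ref{def:GG} verbatim with $C = 1$. No additional geometric input is required beyond Theorem~\ref{theo:main} and Guth's theorem.
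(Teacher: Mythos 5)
Your proof is correct and is exactly the paper's argument: the corollary is obtained by citing Guth's result in \cite{Gu09} to supply the $\rm{GG}_C$-property with $p=2$ and $C=1$, then substituting into Theorem~\ref{theo:main}. Your added remark on discharging the orientability hypothesis of Theorem~\ref{theo:main} (which the paper leaves implicit) is a sound observation, though it can be justified more elementarily than by elliptization, e.g.\ via the Lefschetz fixed-point theorem applied to an orientation-reversing deck transformation of the simply connected universal cover.
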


For example, the Poincar\'e homology~$3$--sphere satisfies the systolic
inequality \eqref{Poincare}. \\

In the next section, we present related developments in systolic
geometry and compare some of our arguments in the proof of
Theorem~\ref{theo:main} to Guth's in~\cite{Gu09},
\cf~Remark~\ref{rem:compare}.  Additional recent developments in
systolic geomety include \cite{AK, BB10, Bal08, e7, BT, Be08, Bru,
Bru2, Bru3, DKR, DR09, Elm10, EL, Gu09, KK, KK2, Ka4, KR2, KSh, NR,
Par10, Ro, RS08, Sa08, Sa10}.

\section{Recent progress on Gromov's inequality}

M.~Gromov's upper bound for the~$1$--systole of an essential
manifold~$M$ \cite{Gr1} is a central result of systolic geometry.
Gromov's proof exploits the Kuratowski imbedding of~$M$ in the Banach
space~$L^\infty$ of bounded functions on~$M$.  A complete analytic
proof of Gromov's inequality \cite{Gr1}, but still using the
Kuratowski imbedding in~$L^\infty$, was recently developed by
L.~Ambrosio and the second-named author~\cite{AK}.  See
also~\cite{AW}.

S.~Wenger~\cite{wen} gave a complete analytic proof of an
isoperimetric inequality between the volume of a manifold~$M$, and its
filling volume, a result of considerable independent interest.  On the
other hand, his result does not directly improve or simplify the proof
of Gromov's main filling inequality for the filling radius.  Note that
both the filling inequality and the isoperimetric inequality are
proved simultaneously by Gromov, so that proving the isoperimetric
inequality by an independent technique does not directly simplify the
proof of either the filling radius inequality, or the systolic
inequality.

L.~Guth \cite{Gu11} gave a new proof of Gromov's systolic inequality
in a strengthened {\em local\/} form.  Namely, he proved Gromov's
conjecture that every essential manifold with unit systole contains a
ball of unit radius with volume uniformly bounded away from zero.

Most recently, Guth \cite {Gu09} re-proved a significant case of
Gromov's systolic inequality \cite{Gr1} for essential manifolds,
without using Gromov's filling invariants.

Actually, in the case of surfaces, Gromov himself had proved better
estimates, without using filling invariants, by sharpening a technique
independently due to Y.~Burago and V.~Zalgaller \cite[p.~43]{BZ}, and
J.~Hebda \cite{Hebda}.  Here the essential idea is the following.

Let~$\gamma(s)$ be a minimizing non-contractible closed geodesic of
length~$L$ in a surface~$S$, where the arclength parameter~$s$ varies
through the interval~$[-\frac{L}{2}, \frac{L}{2}]$.  We consider
metric balls (metric disks)~$B(p,r) \subset S$ of radius~$r<
\frac{L}{2}$ centered at~$p=\gamma(0)$.  The two points~$\gamma(r)$
and~$\gamma(-r)$ lie on the boundary sphere (boundary curve)~$\partial
B(p,r)$ of the disk.  If the points lie in a common connected
component of the boundary (which is necessarily the case if~$S$ is a
surface and~$L=\sys(S)$, but may fail if~$S$ is a more
general~$2$--complex), then the boundary curve has length at
least~$2r$.  Applying the coarea formula
\begin{equation}
\label{11b}
\area \, B(p,r)=\int_0^r \length \, \partial B(p,\rho) \, d\rho,
\end{equation}
we obtain a lower bound for the area which is quadratic in~$r$.

Guth's idea is essentially a higher-dimensional analogue of Hebda's,
where the minimizing geodesic is replaced by a minimizing
hypersurface.  Some of Guth's ideas go back to the even earlier texts
by Schoen and Yau \cite{SY78, SY79}.

The case handled in \cite{Gu09} is that of~$n$--dimensional manifolds
of maximal~$\Z_2$--cuplength, namely~$n$.  Thus, Guth's theorem covers
both tori and real projective spaces, directly generalizing the
systolic inequalities of Loewner and Pu, see \cite{Pu} and \cite{SGT}
for details.


\begin{remark} \label{rem:compare}
To compare Guth's argument in his text~\cite{Gu09} and our proof of
Theorem~\ref{theo:main}, we observe that the topological ingredient of
Guth's technique exploits the multiplicative structure of the
cohomology ring~$H^*(\Z_2;\Z_2)=H^*(\R {\mathbb P}^\infty; \Z_2)$.
This ring is generated by the~$1$--dimensional class.  Thus, every
$n$--dimensional cohomology class decomposes into the cup product
of~$1$--dimensional classes.  This feature enables a proof by
induction on~$n$.

Meanwhile, for~$p$ odd, the cohomology ring~$H^*(\Z_p;\Z_p)$ is not
generated by the~$1$--dimensional class; see Proposition~\ref{42} for
a description of its structure.  Actually, the square of
the~$1$--dimensional class is zero, which seems to yield no useful
geometric information.

Another crucial topological tool used in the proof of~\cite{Gu09} is
Poincar\'e duality which can be applied to the manifolds representing
the homology classes in~$H_*(\Z_2;\Z_2)$.  For~$p$ odd, the homology
classes of~$H_{2k}(\Z_p;\Z_p)$ cannot be represented by manifolds.
One could use D.~Sullivan's notion of~$\Z_p$--manifolds,
\cf~\cite{Su,MS}, to represent these homology class, but they do not
satisfy Poincar\'e duality.

Finally, we mention that, when working with cycles representing
homology classes with torsion coefficients in~$\Z_p$, we exploit a
notion of volume which ignores the multiplicities in~$\Z_p$,
\cf~Definition~\ref{def:Vol}.  This is a crucial feature in our proof. 
%
%
Note that minimal cycles with torsion coefficients were studied by
B.~White \cite{Wh2}.
\end{remark}

\section{Area of balls in~$2$--complexes}

It was proved in~\cite{Gr1} and~\cite{KRS} that a finite~$2$--complex
admits a systolic inequality if and only if its fundamental group is
nonfree, or equivalently, if it is~$\phi$--essential for~$\phi= {\rm
Id}$.

In~\cite{KRS}, we used an argument by contradiction, relying on an
invariant called {\em tree energy\/}, to prove a bound for the
systolic ratio of a~$2$--complex.  We present an alternative short proof
which yields a stronger result and simplifies the original argument.

\begin{theorem} 
\label{theo:r2}
Let~$X$ be a piecewise Riemannian finite essential~$2$--complex.  There
exists~$x \in X$ such that the area of every~$r$--ball centered at~$x$
is at least~$r^2$ for every~$r \leq \frac{1}{2} \sys(X)$.
\end{theorem}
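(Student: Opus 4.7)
The plan is to combine a short-loop argument \`a la Hebda with the coarea formula, following the surface-case strategy sketched in the excerpt and adapting it to a general $2$-complex.

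I would first use essentiality of $X$ (equivalently, nonfreeness of $\pi_1(X)$) to produce a systolic loop $\gamma$ of length $L = \sys(X)$ representing a nontrivial free homotopy class, parametrize it by arclength, and observe that by minimality of $L$, for any base point $x = \gamma(s_0)$ and any $\rho \leq L/2$, the arc of $\gamma$ of length $\rho$ emanating from $x$ in either direction is a minimizing path; otherwise splicing in a shortcut would yield a loop shorter than $\gamma$ in the same class. In particular the two points $y_{\pm}(\rho) := \gamma(s_0 \pm \rho)$ lie on $\partial B(x,\rho)$.

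For a suitably chosen $x \in \gamma$ I would apply the coarea formula
\[
\area B(x, r) = \int_0^r \length \, \partial B(x, \rho) \, d\rho
\]
and aim to establish the pointwise estimate $\length \, \partial B(x,\rho) \geq 2\rho$ for all $\rho \leq L/2$, which immediately integrates to $\area B(x,r) \geq r^2$. To obtain this pointwise bound, let $\alpha$ denote the arc of $\gamma$ of length $2\rho$ through $x$ joining $y_-(\rho)$ to $y_+(\rho)$, and $\beta$ the complementary arc of length $L-2\rho$. For any path $c$ on $\partial B(x,\rho)$ between $y_-(\rho)$ and $y_+(\rho)$, the two loops $\alpha \cdot c^{-1}$ and $c \cdot \beta$ concatenate to the free homotopy class of $\gamma$; since $[\gamma] \neq 1$, at least one of them is non-contractible and hence has length at least $L$. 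In the favorable case, when $c \cdot \beta$ is the non-contractible one, this forces $\length(c) \geq 2\rho$, whence $\length \, \partial B(x,\rho) \geq 2\rho$.

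The hard part will be handling the two degenerate configurations that are absent from the surface setting: (i) the boundary points $y_{\pm}(\rho)$ may lie in distinct connected components of $\partial B(x,\rho)$, so no in-boundary connecting path $c$ exists; and (ii) it may be the short-arc loop $\alpha \cdot c^{-1}$, rather than $c \cdot \beta$, that is non-contractible, yielding only $\length(c) \geq L-2\rho$, which is too weak for $\rho > L/4$. I would attempt to circumvent both obstructions by exploiting the freedom in the choice of $x$ along $\gamma$: via an extremal or averaging argument, together with the nonfreeness of $\pi_1(X)$, I would produce a base point $x$ on $\gamma$ at which, for every $\rho \leq L/2$ (or at least almost every $\rho$, which suffices for coarea), the boundary $\partial B(x,\rho)$ is connected along $\gamma$ and the long-arc loop $c \cdot \beta$ is the non-contractible one, so that the favorable Hebda bound holds throughout. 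Ruling out the disconnection of $\partial B(x,\rho)$ uniformly in $\rho$ will, I anticipate, be the main technical obstacle, and is precisely the $2$-complex phenomenon the authors call out in the Burago--Zalgaller--Hebda paragraph of the introduction.
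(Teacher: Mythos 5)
Your framework — Hebda's short-arc argument combined with the coarea formula — is the same as the paper's, and you have correctly identified the two potential obstructions (i) disconnected distance spheres, and (ii) the short-arc loop $\alpha\cdot c^{-1}$ being the non-contractible one. However, the resolution you sketch does not match the paper's and leaves a genuine gap.

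First, obstruction (ii) is in fact not an issue and requires no choice of base point: for $\rho < \tfrac12\sys$, any loop contained in (a slight enlargement of) $B(x,\rho)$ is homotopic to a product of loops through $x$ of length $< 2\rho + \varepsilon < \sys$, hence contractible (this is the content of Lemma~\ref{lem:trivial}). So $\alpha\cdot c^{-1}$ is always contractible, $c\cdot\beta$ is always the non-contractible one, and the favorable bound $\length(c)\geq 2\rho$ holds automatically. Second, and more seriously, your proposed fix for obstruction (i) — an extremal or averaging argument over the choice of $x$ along a systolic loop of $X$ — is not what the paper does, and there is no indication it can work: for a fixed non-contractible loop in $X$ there may well be no base point on it for which all spheres of radius $\leq\tfrac12\sys(X)$ are connected, since $\pi_1(X)$ may contain free factors. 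The paper's actual mechanism is algebraic: write the Grushko decomposition $\pi_1(X)=G_1\ast\cdots\ast G_r\ast F$; essentialness forces some freely indecomposable, non-$\Z$ factor $G_1$ to exist; take $\gamma$ to be a shortest loop whose class lies in the conjugacy class of $G_1$ (note $\gamma$ need not be a systolic loop of $X$, contrary to your setup); pass to the cover $\overline X$ with $\pi_1(\overline X)=G_1$, where $\sys(\overline X)=\length(\gamma)$. In $\overline X$ the level curves of the distance function from a lift $\bar x$ of $x\in\gamma$ are forced to be connected, because a disconnected one would split $\pi_1(\overline X)=G_1$ as a free product with a $\Z$ factor, contradicting free indecomposability (this is the role of \cite[Proposition~7.5]{KRS}). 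Hebda's argument then runs cleanly in $\overline X$, and the $r$-ball projects injectively to $X$ for $r\leq\tfrac12\sys(X)$. Without the passage to a cover determined by the Grushko decomposition, the connectedness step in your plan is unsupported.
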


As mentioned in the introduction, we conjecture that this result still
holds for~$\phi$--essential complexes and with the~$\phi$--relative
systole in place of~$\sys$.

\begin{proof}
We can write the Grushko decomposition of the fundamental group of~$X$
as
\begin{equation*}
\pi_1(X) = G_1*\cdots*G_r*F,
\end{equation*}
where~$F$ is free, while each group~$G_i$ is nontrivial,
non-isomorphic to~$\Z$, and not decomposable as a nontrivial free
product.

Consider the equivalence class~$[G_1]$ of~$G_1$ under external
conjugation in~$\pi_1(X)$.  Let~$\gamma$ be a loop of least length
representing a nontrivial class~$[\gamma]$ in~$[G_1]$.  Fix~$x \in
\gamma$ and a copy of~$G_1 \subset \pi_1(X,x)$ containing the homotopy
class of~$\gamma$.  Let~$\overline{X}$ be the cover of~$X$ with
fundamental group~$G_1$.

\begin{lemma}
We have~$\sys(\overline{X}) = \length(\gamma)$.
\end{lemma}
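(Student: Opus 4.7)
The plan is to prove the two inequalities $\sys(\overline{X}) \leq \length(\gamma)$ and $\sys(\overline{X}) \geq \length(\gamma)$ separately, both using the fact that the covering projection $p \co \overline{X} \to X$ is a local isometry, together with the defining property of the cover, namely $p_*\pi_1(\overline{X}) = G_1$ under a suitable identification.

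For the upper bound, I would observe that since the based homotopy class of $\gamma$ lies in $G_1 = p_*\pi_1(\overline{X})$, the loop $\gamma$ lifts to a loop $\widetilde{\gamma}$ in $\overline{X}$ (rather than merely to a path). The lift satisfies $\length(\widetilde{\gamma}) = \length(\gamma)$, and it is non-contractible in $\overline{X}$, since $p_*$ is injective and the image $[\gamma] \in G_1$ is nontrivial by hypothesis. Hence $\sys(\overline{X}) \leq \length(\widetilde{\gamma}) = \length(\gamma)$.

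For the lower bound, I would take any non-contractible loop $\delta$ in $\overline{X}$, based at a point $y_0$ with $p(y_0) = x'$. Its projection $p \circ \delta$ is a loop in $X$ of the same length, and its based homotopy class lies in $p_*\pi_1(\overline{X}, y_0)$, which is a conjugate of $G_1$ inside $\pi_1(X, x')$. Changing basepoint along a path in $\overline{X}$ from a fixed lift $\widetilde{x}$ of $x$ to $y_0$ and projecting, one sees that the free homotopy class of $p \circ \delta$ meets $G_1 \subset \pi_1(X,x)$, so it represents a nontrivial element of~$[G_1]$. By the minimality defining $\gamma$, we obtain $\length(\delta) = \length(p \circ \delta) \geq \length(\gamma)$; taking the infimum over $\delta$ gives $\sys(\overline{X}) \geq \length(\gamma)$.

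The argument is essentially a careful basepoint-chasing exercise, and the only step that demands attention is the last one: verifying that the projected loop $p \circ \delta$, whose based class depends on a choice of basepoint in $\overline{X}$, represents a free homotopy class that genuinely lies in $[G_1]$ (i.e., meets the specific copy of $G_1$ that was fixed), and is genuinely nontrivial rather than just conjugately related to one. Injectivity of $p_*$ for covering maps disposes of nontriviality, and the conjugation of subgroups under basepoint change disposes of the class-membership issue.
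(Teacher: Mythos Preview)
Your proposal is correct and follows essentially the same approach as the paper: lift~$\gamma$ to get the upper bound, and project an arbitrary noncontractible loop~$\delta$ in~$\overline{X}$ to obtain a loop in~$X$ whose nontrivial class lies in~$[G_1]$, contradicting the minimality of~$\gamma$ for the lower bound. The paper's argument is terser and does not spell out the basepoint-chasing, but the substance is identical.
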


\begin{proof}
%
The loop~$\gamma$ lifts to~$\overline{X}$ by construction of the subgroup
$G_1$.  Thus,~$\sys(\overline{X}) \leq \length(\gamma)$.  Now, the
cover~$\overline{X}$ does not contain noncontractible loops~$\delta$
shorter than~$\gamma$, because such loops would project to~$X$ so that
the nontrivial class~$[\delta]$ maps into~$[G_1]$, contradicting our
choice of~$\gamma$.
\end{proof}

Continuing with the proof of the theorem, let~$\bar{x} \in \overline{X}$
be a lift of~$x$.  Consider the level curves of the distance function
from~$\bar{x}$.  Note that such curves are necessarily connected, for
otherwise one could split off a free-product-factor~$\Z$
in~$\pi_1(\overline{X})=G_1$, \cf~\cite[Proposition 7.5]{KRS},
contradicting our choice of~$G_1$.  In particular, the
points~$\gamma(r)$ and~$\gamma(-r)$ can be joined by a path contained
in the curve at level~$r$.  Applying the coarea formula~\eqref{11b},
we obtain a lower bound~$\area \, B(\bar{x},r)\geq r^2$ for the area
of an~$r$--ball~$B(\bar{x},r) \subset \overline{X}$, for all~$r \leq
\frac{1}{2} \length(\gamma)=\frac{1}{2} \sys(\overline{X})$.

If, in addition, we have~$r \leq \frac{1}{2}\sys(X)$ (which apriori
might be smaller than~$\frac{1}{2} \sys(\overline{X})$), then the ball
projects injectively to~$X$, proving that
\begin{equation*}
\area(B(x,r)\subset X) \geq r^2
\end{equation*}
for all~$r\leq \frac{1}{2}\sys(X)$.
\end{proof}

\section{Outline of argument for relative systole}
\label{three}

Let~$X$ be a piecewise Riemannian connected~$2$--complex, and
assume~$X$ is~$\phi$--essential for a group homomorphism
$\phi \co \pi_1(X)\to G$.  We would like to prove an area lower bound
for~$X$, in terms of the~$\phi$--relative systole as in
Theorem~\ref{theo:r2}.  Let~$x \in X$.  Denote by~$B=B(x,r)$ and
$S=S(x,r)$ the open ball and the sphere (level curve) of radius~$r$
centered at~$x$ with~$r < \frac{1}{2} \sys(X,\phi)$.  Consider the
interval~$I=[0,\frac{L}{2}]$, where~$L=\length(S)$.

\begin{definition} 
\label{def:Y}
We consider the complement~$X \setminus B$, and attach to it a buffer
cylinder along each connected component~$S_i$ of~$S$.  Here a buffer
cylinder with base~$S_i$ is the quotient
\begin{equation*}
S_i \times I/ \!\! \sim
\end{equation*}
where the relation~$\sim$ collapses each subset~$S_i \times \{ 0 \}$
to a point~$x_i$.  We thus obtain the space
\[
\left( S_i \times I/ \!\! \sim \right) \cup_f \left( X \setminus B
\right),
\]
where the attaching map~$f$ identifies
$S_i\times\left\{\tfrac{L}{2}\right\}$ with~$S_i\subset X\setminus B$.
To ensure the connectedness of the resulting space, we attach a
cone~$CA$ over the set of points~$A=\{ x_i \}$.  We set the length of the
edges of the cone~$CA$ equal to~$\sys(X,\phi)$.  We will denote by
\begin{equation} \label{eq:Y}
Y=Y(x,r)
\end{equation}
the resulting~$2$--complex.  The natural metrics on~$X \setminus B$ and
on the buffer cylinders induce a metric on~$Y$.
\end{definition}

In the next section, we will show that~$Y$ is~$\psi$--essential for
some homomorphism~$\psi \co \pi_1(Y) \to G$ derived from~$\phi$.  The
purpose of the buffer cylinder is to ensure that the relative systole
of~$Y$ is at least as large as the relative systole of~$X$.  Note that
the area of the buffer cylinder is~$L^2/2$.

We normalize~$X$ to unit relative systole and take a point~$x$ on a
relative systolic loop of~$X$.
%
%
Suppose~$X$ has a minimal first Betti number among the complexes
essential in~$K(G,1)$ with almost minimal systolic area (up to
epsilon).  We sketch below the proof of the local relative systolic
inequality satisfied by~$X$.

If for every~$r$, the space~$Y=Y(x,r)$ has a greater area than~$X$,
then
\begin{equation*}
\area \, B(r) \leq \tfrac{1}{2}(\length \, S(r))^2
\end{equation*}
for every~$r < \frac{1}{2} \sys(X,\phi)$.  Using the coarea
inequality, this leads to the differential inequality~$y(r) \leq
\tfrac{1}{2} y'(r)^2$.  Integrating this relation shows that the area
of~$B(r)$ is at least~$\frac{r^2}{2}$, and the conclusion follows.

If for some~$r$, the space~$Y$ has a smaller area than~$X$, we argue
by contradiction.  We show that a~$\phi$--relative systolic loop of~$X$
(passing through~$x$) meets at least two connected components of the
level curve~$S(r)$.  These two connected components project to two
endpoints of the cone~$CA$ connected by an arc of~$Y \setminus CA$.
%
%
Under this condition, we can remove an edge~$e$ from~$CA$ so that the
space~$Y'=Y \setminus e$ has a smaller first Betti number than~$X$.
Here~$Y'$ is still essential in~$K(G,1)$, and its relative systolic
area is better than the relative systolic area of~$X$, contradicting
the definition of~$X$.

\section{First Betti number and essentialness of~$Y$} \label{sec:remov}

Let~$G$ be a fixed finitely presented group.  We are mostly interested
in the case of a finite group~$G=\Z_p$.  Unless specified otherwise,
all group homomorphisms have values in~$G$, and all complexes are
assumed to be finite.  Consider a homomorphism~$\phi \co \pi_1(X) \to
G$ from the fundamental group of a piecewise Riemannian finite
connected~$2$--complex~$X$ to~$G$.

\begin{definition}
A loop~$\gamma$ in~$X$ is said to be~$\phi$--contractible if the image
of the homotopy class of~$\gamma$ by~$\phi$ is trivial, and
$\phi$--noncontractible otherwise.  Thus, the~$\phi$--systole of~$X$,
denoted by~$\sys(X,\phi)$, is defined as the least length of a
$\phi$--noncontractible loop in~$X$.  Similarly, the
\mbox{$\phi$--systole} based at a point~$x$ of~$X$, denoted
by~$\sys(X,\phi,x)$, is defined as the least length of
a~$\phi$--noncontractible loop based at~$x$.
\end{definition}

\forget
\begin{definition}
\label{42b}
The~$\phi$--systolic area of~$X$ is defined as
$$
\sigma_{\phi}(X) = \frac{\area(X)}{\sys(X,\phi)^{2}}.
$$
\end{definition}
\forgotten

The following elementary result will be used repeatedly in the sequel.

\begin{lemma} 
\label{lem:trivial}
If~$r < \frac{1}{2} \sys(X,\phi,x)$, then
the~$\pi_{1}$--homomorphism~$i_{*}$ induced by the inclusion~$B(x,r)
\subset X$ is trivial when composed with~$\phi$, that is~$\phi \circ
i_{*}=0$.  More specifically, every loop in~$B(x,r)$ is homotopic to a
composition of loops based at~$x$ of length at most~$2r+\varepsilon$, for
every~$\varepsilon>0$.
\end{lemma}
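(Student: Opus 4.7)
The plan is to establish the more refined second assertion first, and then to deduce the triviality of $\phi\circ i_*$ as an immediate corollary of the definition of $\sys(X,\phi,x)$.

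Given a loop $\alpha\co[0,1]\to B(x,r)$ based at some point $p_0\in B(x,r)$, I would first subdivide the parameter interval into $0=t_0<t_1<\cdots<t_n=1$ finely enough that each sub-arc $\alpha_i:=\alpha|_{[t_{i-1},t_i]}$ has length less than $\varepsilon/2$. Setting $p_i=\alpha(t_i)$, we have $d(x,p_i)<r$, so, using that the piecewise Riemannian $2$--complex $X$ is a length space, I would choose a path $\sigma_i$ from $x$ to $p_i$ with $\length(\sigma_i)\leq r+\varepsilon/4$, taking $\sigma_0=\sigma_n$.

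Next, I would form the based loops $\ell_i:=\sigma_{i-1}\cdot\alpha_i\cdot\sigma_i^{-1}$ at $x$. Inserting the trivial pieces $\sigma_i\cdot\sigma_i^{-1}$ between consecutive segments shows that the conjugated loop $\sigma_0\cdot\alpha\cdot\sigma_0^{-1}$ is homotopic rel $x$ to the concatenation $\ell_1\cdot\ell_2\cdots\ell_n$, where each factor satisfies
\[
\length(\ell_i)\leq\length(\sigma_{i-1})+\length(\alpha_i)+\length(\sigma_i)\leq 2r+\varepsilon.
\]
This is the content of the refined assertion. For the main claim that $\phi\circ i_*=0$, I would use the strict inequality $r<\tfrac{1}{2}\sys(X,\phi,x)$ to select $\varepsilon>0$ so small that $2r+\varepsilon<\sys(X,\phi,x)$. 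Each $\ell_i$ is then a based loop at $x$ of length strictly less than $\sys(X,\phi,x)$, hence $\phi$--contractible by the very definition of the pointed $\phi$--systole. Therefore $\phi$ sends the product $\ell_1\cdots\ell_n$, and hence the conjugated class $[\sigma_0\cdot\alpha\cdot\sigma_0^{-1}]$, to the identity. Since conjugation by $\sigma_0$ realizes the change-of-basepoint isomorphism $\pi_1(X,p_0)\to\pi_1(X,x)$, this shows that $\phi\circ i_*$ kills the class $[\alpha]\in\pi_1(B(x,r),p_0)$.

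I expect no substantive obstacle: the argument is a standard telescoping of $\alpha$ against near-minimizing radial paths. The only points requiring a small amount of care are the distinction between the base point $p_0$ of the loop and the center $x$ of the ball, which is resolved by conjugation by $\sigma_0$ since $\phi$--triviality in the abstract group $G$ is preserved under conjugation, and the fact that we work on a $2$--complex rather than on a smooth manifold, which plays no role because the construction uses only the length-space structure of $X$.
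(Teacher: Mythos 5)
Your proof is correct. The paper in fact gives no proof of this lemma, introducing it only as ``the following elementary result,'' so there is no written argument to compare against; but your telescoping construction --- subdividing $\alpha$ into short arcs, joining the subdivision points to the center $x$ by nearly-shortest paths, forming the conjugate loops $\ell_i = \sigma_{i-1}\cdot\alpha_i\cdot\sigma_i^{-1}$ of length at most $2r+\varepsilon$, and collapsing the telescoping product back to $\sigma_0\cdot\alpha\cdot\sigma_0^{-1}$ --- is exactly the standard argument the authors are alluding to, and it is carried out carefully. Your remarks at the end correctly identify the two points that need care: the change of basepoint is harmless because $\phi$--triviality is a conjugation-invariant condition in $G$, and the argument uses only the length-space structure of $X$, so the passage from manifolds to piecewise Riemannian $2$--complexes costs nothing. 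One small simplification you could make: since $d(x,p_i)<r$ strictly, the length space property already gives paths $\sigma_i$ of length $<r$, so the extra $\varepsilon/4$ slack on the radial paths is unnecessary; but this does not affect the correctness of the bound $2r+\varepsilon$.
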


Without loss of generality, we may assume that the piecewise
Riemannian metric on~$X$ is piecewise flat.  Let~$x_{0} \in X$.  The
piecewise flat~$2$--complex~$X$ can be embedded into some~$\R^N$ as a
semialgebraic set and the distance function~$f$ from~$x_0$ is a
continuous semialgebraic function on~$X$, \cf~\cite{BCR98}.
Thus,~$(X,B)$ is a CW-pair when~$B$ is a ball centered at~$x_0$ (see
also \cite[Corollary~6.8]{KRS}).  Furthermore, for almost every~$r$,
there exists a~$\eta >0$ such that the set
\begin{equation*}
\{ x \in X \mid r-\eta < f(x) < r+\eta \}
\end{equation*}
is homeomorphic to~$S(x_0,r) \times (r-\eta,r+\eta)$ where~$S(x_0,r)$
is the~$r$--sphere centered at~$x_{0}$ and the~$t$--level curve of~$f$
corresponds to~$S(x_0,r) \times \{t\}$, \cf~\cite[\S~9.3]{BCR98}
and~\cite{KRS} for a precise description of level curves on~$X$.  
In such case, we say that~$r$ is a \emph{regular value} of~$f$. \\

\forget 

Since the function~$\ell(r) = \length \, f^{-1}(r)$ is piecewise
continuous, \cf~\cite[\S~9.3]{BCR98}, the condition~$\area \, B >
\lambda \, (\length \, S)^{2}$ is open (see ~\eqref{eq:lambda} below).
Therefore, slightly changing the value of~$r$ if necessary, we can
assume that~$r$ is regular. 

\forgotten

Consider the connected~$2$--complex~$Y=Y(x_0,r)$ introduced in
Definition~\ref{def:Y}, with~$r < \frac{1}{2} \sys(X,\phi)$ and~$r$
regular.  Since~$r$ is a regular value, there exists~$r_- \in (0,r)$
such that~$B \setminus B(x_0,r_-)$ is homeomorphic to the product
\[
S \times [r_-,r) = \coprod_i S_i \times [r_-,r).
\]
Consider the map
\begin{equation} 
\label{eq:XY}
\pi \co X \to Y
\end{equation}
which leaves~$X \setminus B$ fixed, takes~$B(x_0,r_-)$ to the vertex
of the cone~$CA$, and sends~$B \setminus B(x_0,r_-)$ to the union of
the buffer cylinders and~$CA$.
This map induces an epimorphism between the first
homology groups.  In particular,
\begin{equation} \label{eq:b1}
b_1(Y) \leq b_1(X).
\end{equation}

\medskip

\forget
\begin{lemma}
\label{lem:betti}
We have
\[
b_1(Y) \leq b_1(X).
\]
\end{lemma}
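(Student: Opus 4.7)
The plan is to establish directly that the map $\pi_* \co H_1(X) \to H_1(Y)$ is surjective; passage to ranks then gives $b_1(Y) \leq b_1(X)$. The decisive topological observation is that the subcomplex $V \subset Y$ built from the buffer cylinders joined through the cone $CA$ is \emph{contractible}. Indeed, each buffer cylinder $S_i \times I/\!\sim$ is the mapping cylinder of the constant map $S_i \to x_i$ and is therefore contractible; the cone $CA$ is a star (a contractible tree) joining the vertices $x_i$; and the overall space is assembled by wedging contractible pieces at single points. Moreover $V$ meets $X\setminus B$ precisely along $S = \coprod_i S_i$, so that $Y = V \cup (X\setminus B)$ and $X = B \cup (X\setminus B)$ admit parallel Mayer--Vietoris decompositions with the same gluing locus $S$, and $\pi$ is the identity on $X\setminus B$.

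With this setup I would run Mayer--Vietoris on $Y$. Since $H_1(V) = 0$, the sequence reduces to
\[
\bigoplus_i H_1(S_i) \longrightarrow H_1(X\setminus B) \longrightarrow H_1(Y) \longrightarrow \ker\bigl(H_0(S) \to H_0(X\setminus B) \oplus H_0(V)\bigr) \longrightarrow 0,
\]
so every class in $H_1(Y)$ is represented by a cycle of the form $c_0 + c_V$ with $c_0$ a chain in $X\setminus B$, $c_V$ a chain in $V$, and $\partial c_0 = -\partial c_V$ supported in $S$ (cycles already lying in $X\setminus B$, coming from the first summand in the sequence, are manifestly in the image of $\pi_*$ since $\pi$ fixes $X\setminus B$).

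For such a cycle I would construct a preimage as follows. The $0$--chain $\partial c_V$ has total coefficient sum zero, being the boundary of a chain in the connected space $V$, and since $B$ is itself path-connected, there exists a $1$--chain $c_B$ in $B$ with $\partial c_B = \partial c_V$ in $S$. Then $\tilde c := c_0 + c_B$ is a $1$--cycle in $X$, and the difference $\pi_*(\tilde c) - c = \pi_*(c_B) - c_V$ is a $1$--cycle supported in the contractible space $V$, hence a boundary in $V$ and a fortiori in $Y$. Thus $\pi_*[\tilde c] = [c]$ in $H_1(Y)$. The only mildly technical point is justifying the chain-level decomposition $c = c_0 + c_V$; this is handled either by choosing a cellular structure on $X$ in which $B$, $X\setminus B$, and $S$ are all subcomplexes (available since $r$ is a regular value) or by subdivision within a singular Mayer--Vietoris argument. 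Once this is granted, the proof is formal, relying on just two features of the construction: contractibility of $V$, which guarantees that the correction $\pi_*(c_B) - c_V$ bounds, and connectedness of $B$, which guarantees that $c_B$ exists.
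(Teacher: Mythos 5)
Your argument is correct and amounts to a detailed verification, via Mayer--Vietoris and the contractibility of the cone-and-buffer subcomplex $V\subset Y$, that the map $\pi\co X\to Y$ induces an epimorphism on $H_1$, which is precisely the fact the paper asserts (without supplying details) in order to conclude $b_1(Y)\leq b_1(X)$. This is essentially the same approach; you have made explicit the chain-level construction of preimages, the role of connectedness of $B$, and the role of contractibility of $V$ in absorbing the correction term $\pi_*(c_B)-c_V$.
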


\begin{proof}
Since~$r$ is a regular value, there exists~$r_- \in (0,r)$ such that~$B \setminus B(x,r_-)$ is homeomorphic to~$S \times [r_-,r) = \coprod_i S_i \times  [r_-,r)$.
The map~$X \to Z$ which leaves~$X \setminus B$ fixed and takes~$B(x,r_-)$ to the vertex of the cone~$C$ of~$Z$ induces an epimorphism between the first homology groups.
Hence the result.
\end{proof}
\forgotten

\forget
\begin{proof}
Let~$f$ be the distance function from~$x_0$.  It is convenient to
introduce the Reeb space~$\wh{X}$ obtained from~$X$ by collapsing to
points the connected components of the level curves~$f^{-1}(t)$, for
every~$t \in [0,r]$ (level curves for~$t>r$ are unaffected).  Note
that the lower bound for the systole no longer holds for~$\wh{X}$ due
to possible ``shortcuts'' created in the graph~$T\subset\wh{X}$
corresponding to~$t\leq r$.

Since the fibers of the map~$X\to \wh{X}$ are connected, by the
covering homotopy property, we obtain that every closed path in
$\wh{X}$ lifts to a closed path in~$X$, proving the surjectivity of
$\pi_1(X)\to \pi_1(\wh{X})$.

We first assume that~$X\setminus B$ is connected.  Then the Reeb
space~$\wh{X}$ is homotopy equivalent%
\footnote{The fact that the ``Reeb graph'' is indeed a finite graph
follows from semialgebraicity; see \cite{KRS} for a detailed
discussion.}
to the union~$Y \cup T$ obtained by attaching a finite graph~$T$ to
the finite set~$\{x_1,\ldots,x_n\} \subset Y$,
cf.~Definition~\eqref{def:Y}.  By van Kampen's theorem, the removal of
the graph leads to a further decrease in the Betti number.  The
non-closed path~$\alpha$ closes up to a loop in~$X$ but not in~$Y$.

If~$X\setminus B$ is not connected, our space~$Y$ is homotopy
equivalent to a connected component of~$\wh{X}$ with the graph~$T$
removed, proving the lemma.
\end{proof}
\forgotten

\forget
Let~$A=\{x_1,\ldots, x_n\}$ be the finite set formed by the
points~$x_i$.  Let~$Y \cup CA$ be the space obtained by attaching a
cone over~$A$ to~$Y$.  Consider the map
\[
\wh{X} \to Y \cup CA
\]
which leaves~$Y$ fixed and takes~$T \setminus (\cup_i e_i)$ to the
vertex of~$CA$, where the~$e_i$ are the semi-edges of~$T$ with
endpoints~$x_i$.  The composite
\[
X \to \wh{X} \to Y \cup CA,
\]
where~$X \to \wh{X}$ is the quotient map, leaves~$X \setminus
\overline{B}$ fixed and induces an epimorphism between the first
homology groups.  Hence,
$$
b_1(Y) \leq b_1(Y \cup CA) \leq b_1(X).
$$

Now, suppose that the projection of some arc~$\alpha$ of~$X \setminus
B$ to~$Y$ connects two points of~$A$.  Then the space~$Y \cup CA$ is
homotopy equivalent to~$(Y \cup CA') \vee S^{1}$, where~$A'
\subset A$.  That is,
$$
Y \cup CA \simeq (Y \cup CA') \vee S^{1}.
$$
We deduce that
$$
b_1(Y) < b_1(Y \cup CA) \leq b_1(X). 
$$
\forgotten


\begin{lemma} \label{lem:class}
If~$r < \frac{1}{2} \sys(X,\phi)$, then~$Y$ is~$\psi$--essential for
some homomorphism~$\psi \co \pi_{1}(Y) \to G$ such that
\begin{equation} \label{eq:circ}
\psi \circ \pi_* =\phi
\end{equation}
where~$\pi_*$ is the~$\pi_1$--homomorphism induced by \mbox{$\pi \co X \to Y$}.
\end{lemma}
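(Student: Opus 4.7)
The plan is to produce $\psi$ as the $\pi_1$-homomorphism of a carefully built classifying map $g \co Y \to K(G,1)$, and derive both $\psi \circ \pi_* = \phi$ and the $\psi$-essentialness of $Y$ from corresponding properties of a classifying map $f \co X \to K(G,1)$ for $\phi$.

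Since the inequality $r < \tfrac{1}{2}\sys(X,\phi) \leq \tfrac{1}{2}\sys(X,\phi,x_0)$ is strict, a slight enlargement of $r$ together with Lemma~\ref{lem:trivial} shows that every loop in the closed ball $\overline{B} = \overline{B(x_0,r)}$ is $\phi$-trivial, so $f|_{\overline{B}}$ is $\pi_1$-trivial. Since $\pi_n(K(G,1))=0$ for $n \geq 2$, maps of $2$-complexes into $K(G,1)$ are classified up to homotopy by their $\pi_1$-effect, and therefore $f|_{\overline{B}}$ is nullhomotopic; choose a nullhomotopy $F \co \overline{B} \times [0,1] \to K(G,1)$ with $F(\cdot,0) \equiv *$ and $F(\cdot,1) = f|_{\overline{B}}$, and set $H_i := F|_{S_i \times [0,1]}$. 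Define $g \co Y \to K(G,1)$ by $g = f$ on $X \setminus B$, by $g(s,t) = H_i(s,2t/L)$ on each buffer cylinder $S_i \times [0,L/2]/\!\sim$ (well-defined since $H_i(\cdot,0) \equiv *$ matches the collapsing $S_i \times \{0\} \sim x_i$, and $g|_{S_i}=f|_{S_i}$ matches the attachment to $X \setminus B$), and by $g \equiv *$ on $CA$. Set $\psi := g_* \co \pi_1(Y) \to G$.

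To verify $\psi \circ \pi_* = \phi$, decompose any loop in $X$ based at a point $y_0 \in X \setminus B$ into sub-arcs lying either in $X \setminus B$ (where $f = g \circ \pi$) or as arcs $\alpha \subset B$ with endpoints $s_0 \in S_i$ and $s_1 \in S_j$ on $S$. The square $(t,s) \mapsto F(\alpha(t),s)$ gives a nullhomotopic boundary loop in $K(G,1)$, which rearranges to the identity $f \circ \alpha \simeq \overline{F(s_0,\cdot)} \cdot F(s_1,\cdot)$ rel endpoints. Meanwhile, inside the contractible subspace $\pi(\overline{B}) = CA \cup \bigcup_i (S_i \times [0,L/2]/\!\sim) \subset Y$, the path $\pi \circ \alpha$ is homotopic rel endpoints to the standard path $\beta$ that travels from $s_0$ through the buffer cylinder at $S_i$ to $x_i$, along cone edges through the cone vertex to $x_j$, then through the buffer cylinder at $S_j$ to $s_1$; by construction $g \circ \beta = \overline{F(s_0,\cdot)} \cdot F(s_1,\cdot)$ up to reparametrization. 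Hence $g \circ \pi \circ \alpha \simeq f \circ \alpha$ rel endpoints, and concatenating over sub-arcs yields $(g \circ \pi)_* = f_* = \phi$, i.e., $\psi \circ \pi_* = \phi$. Essentialness of $Y$ then follows by contradiction: if $g$ were homotopic to a map into $K(G,1)^{(1)}$, the same would hold for $g \circ \pi$, and hence for $f$ (since $f$ and $g \circ \pi$ are both classifying maps for $\phi$ and classifying maps of a $2$-complex into $K(G,1)$ are unique up to homotopy), contradicting the $\phi$-essentialness of $X$. The delicate step is the explicit identification $g \circ \beta = \overline{F(s_0,\cdot)} \cdot F(s_1,\cdot)$ combined with the square-filling argument; everything else follows formally from the construction of $Y$ and the aspherical nature of $K(G,1)$.
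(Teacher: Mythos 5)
Your proof is correct and reaches the same destination as the paper, but by a noticeably more explicit route. The paper's argument is entirely abstract: since $\varphi|_B$ is nullhomotopic, $\varphi$ extends to $X \cup CB$, which is homotopy equivalent to $X/B$, and then one simply observes that the quotient map $X \to X/B$ factors through $\pi\co X \to Y$ (by collapsing the buffer cylinders and the cone $CA$ to the basepoint $B/B$), giving the factorization $X \to Y \to X/B \to K(G,1)$ from which both $\psi\circ\pi_*=\phi$ and essentialness fall out immediately. You instead pick a concrete nullhomotopy $F$ of $f|_{\overline B}$ and use it to \emph{build} the map $g\co Y\to K(G,1)$ by hand on each piece of $Y$, then verify $g\circ\pi\simeq f$ by decomposing loops into arcs and running a square-filling argument on the arcs crossing $B$. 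What the paper's route buys is brevity: the homotopy equivalence $X\cup CB\simeq X/B$ together with the evident collapsing map $Y\to X/B$ absorb your explicit path-homotopy verification into standard facts. What your route buys is transparency: the reader sees exactly why the buffer cylinders and the cone $CA$ are the ``right'' topological glue, since the cone coordinate of each buffer cylinder literally traces out the chosen nullhomotopy, and the cone $CA$ carries the constant map. Two small points worth tightening: the equality $\psi\circ\pi_* = \phi$ is a statement about based homomorphisms, and your verification works with a basepoint $y_0\in X\setminus B$ rather than $x_0\in B$; this is harmless (conjugation of the target, irrelevant to essentialness and to free homotopy classes) but deserves a word. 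Also, when invoking contractibility of $\pi(\overline B)$, you should note that this set is $CA$ together with the buffer cylinders (each a cone over $S_i$), which deformation retracts onto $CA$ and hence is contractible; the claim is true but not completely immediate from the definition of $\pi$.
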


\begin{proof}
Consider the CW-pair~$(X,B)$ where~$B=B(x_0,r)$.  By
Lemma~\ref{lem:trivial}, the restriction of the classifying
map~$\varphi \co X \to K(G,1)$ induced by~$\phi$ to~$B$ is homotopic to a
constant map.  Thus, the classifying map~$\varphi$ extends to~$X \cup
CB$ and splits into
\[
X \hookrightarrow X \cup CB \to K(G,1),
\]
where~$CB$ is a cone over~$B \subset X$ and the first map is the
inclusion map.  Since~$X \cup CB$ is homotopy equivalent to the
quotient~$X/B$, \cf~\cite[Example~0.13]{Hat}, we obtain the following
decomposition of~$\varphi$ up to homotopy:
\begin{equation} \label{eq:XB}
X \stackrel{\pi}{\longrightarrow} Y \to X/B \to K(G,1).
\end{equation}

Hence,~$\psi \circ \pi_* = \phi$ for the~$\pi_1$--homomorphism~$\psi \co \pi_1(Y) \to G$ induced by the map~$Y \to K(G,1)$.  
If the map~$Y \to K(G,1)$ can be homotoped into the~$1$--skeleton of~$K(G,1)$, the same
is true for
\[
X \to Y \to K(G,1)
\]
and so for the homotopy equivalent map~$\varphi$, which contradicts
the~$\phi$--essentialness of~$X$.
\end{proof}

\section{Exploiting a ``fat" ball}

We normalize the~$\phi$--relative systole of~$X$ to one, i.e.~$\sys(X,\phi)=1$.  
Choose a fixed~$\delta \in (0,\frac{1}{2})$
(close to~$0$) and a real parameter~$\lambda > \frac{1}{2}$ (close
to~$\frac{1}{2}$).

\begin{proposition} 
\label{prop:reeb}
Suppose there exist a point~$x_{0} \in X$ and a value~$r_{0} \in
(\delta,\frac{1}{2})$ regular for~$f$ such that
\begin{equation} 
\label{eq:lambda}
\area \, B > \lambda \, (\length \, S)^{2}
\end{equation}
where~$B=B(x_{0},r_{0})$ and~$S=S(x_{0},r_{0})$. 
Then there exists a
piecewise flat metric on~$Y=Y(x_{0},r_{0})$
such that the systolic areas (\cf~Definition~\ref{def:sigma}) satisfy
$$
\sigma_{\psi}(Y) \leq \sigma_{\phi}(X).
$$
\end{proposition}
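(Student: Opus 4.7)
The plan is to equip~$Y$ with the natural piecewise flat metric---the induced metric on~$X \setminus B$, the product metric on each buffer cylinder~$S_i \times [0, L/2]$, and edges of length~$\sys(X,\phi)=1$ on~$CA$---and to verify the two estimates~$\area(Y) < \area(X)$ and~$\sys(Y,\psi) \geq \sys(X,\phi)=1$ separately. Together they yield~$\sigma_\psi(Y) \leq \area(Y) < \area(X) = \sigma_\phi(X)$, which is the required inequality.

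The area bound is immediate. Since~$CA$ is~$1$-dimensional and each buffer cylinder has area~$\length(S_i)\cdot L/2$ in the product metric, summing gives~$\area(Y) = \area(X) - \area(B) + L^2/2$. The hypothesis~$\area(B) > \lambda L^2$ with~$\lambda > 1/2$ then yields~$\area(Y) < \area(X)$.

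For the systolic bound I would argue by contradiction. Suppose~$\gamma$ is a~$\psi$--noncontractible loop in~$Y$ of length~$<1$. Cut~$\gamma$ at its intersections with~$S = \cup_i S_i$ into maximal arcs~$\alpha_k$ lying in~$X \setminus B$ and arcs~$\beta_j$ lying in the attachment (buffer cylinders plus~$CA$). The attachment is contractible---each cylinder is a cone over~$S_i$, and these cones are glued at their apex points to the star tree~$CA$---so any~$\beta_j$ may be replaced by another attachment path with the same endpoints without changing the free homotopy class of~$\gamma$ in~$Y$. Replacing each~$\beta_j$ by an attachment geodesic produces a loop~$\tilde\gamma$, freely homotopic to~$\gamma$ in~$Y$, with~$\length(\tilde\gamma) \leq \length(\gamma) < 1$.

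A direct geodesic analysis in the attachment gives two cases. If both endpoints of~$\beta_j$ lie on a common component~$S_i$, the shortest attachment path is the shorter arc of~$S_i$ between them (length at most~$L/2$, beating the length-$L$ detour through~$x_i$). If the endpoints lie on distinct components~$S_i, S_k$, any attachment path must descend through the first cylinder to~$x_i$, cross two edges of~$CA$, and ascend through the second cylinder, of total length at least~$L/2+1+1+L/2 = L+2 \geq 2$. The second case is incompatible with~$\length(\tilde\gamma) < 1$, so every~$\beta_j$ lies along some~$S_i \subset X \setminus B$; hence~$\tilde\gamma$ is actually a loop in~$X$. Since~$\pi$ is the identity on~$X \setminus B$, Lemma~\ref{lem:class} gives~$\phi([\tilde\gamma]) = \psi(\pi_*[\tilde\gamma]) = \psi([\gamma]) \neq 0$, so~$\tilde\gamma$ is~$\phi$--noncontractible in~$X$ of length~$<1=\sys(X,\phi)$, a contradiction. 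The main obstacle is this geodesic analysis: the buffer cylinders together with cone edges of length~$\sys(X,\phi)$ are calibrated precisely so that every inter-component shortcut through the attachment costs at least~$L+2 > \sys(X,\phi)$, which is what prevents the systole from dropping when~$B$ is excised from~$X$.
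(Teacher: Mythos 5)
Your proof follows the paper's approach exactly: same natural metric on~$Y$, same area accounting~$\area(Y)=\area(X)-\area(B)+L^2/2$, and the same logic of deforming a hypothetical short $\psi$--noncontractible loop into~$X\setminus B$ and deriving a contradiction via Lemma~\ref{lem:class} and the normalization $\sys(X,\phi)=1$; you merely spell out the geodesic analysis of the attachment (cost $\geq L+2$ for any inter-component detour through~$CA$) that the paper states as a one-line observation about the buffer cylinders. The only nuance the paper flags that you gloss over is that the metric with collapsed cylinder ends is not strictly piecewise flat and must be approximated by genuine piecewise flat metrics---a harmless omission, since your area and length estimates pass to the limit.
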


\begin{proof}
Consider the metric on~$Y$ described in Definition~\ref{def:Y}.
Strictly speaking, the metric on~$Y$ is not piecewise flat since the
connected components of~$S$ are collapsed to points, but it can be
approximated by piecewise flat metrics.

Due to the presence of the buffer cylinders, every loop of~$Y$ of
length less than~$\sys(X,\phi)$ can be deformed into a loop of~$X
\setminus B$ without increasing its length.  Thus, by~\eqref{eq:circ},
one obtains
\begin{equation*}
\sys(Y,\psi) \geq \sys(X,\phi) = 1.
\end{equation*}
Furthermore, we have
\begin{equation*}
\area \, Y \leq \area \, X - \area \, B + \tfrac{1}{2} (\length \,
S)^{2}.
\end{equation*}
Combined with the inequality~\eqref{eq:lambda}, this leads to
\begin{equation} \label{eq:wh}
\sigma_{\psi}(Y) < \sigma_{\phi}(X) - \left( \lambda - \tfrac{1}{2}
\right) (\length \, S)^{2}.
\end{equation}
Hence,~$\sigma_{\psi}(Y) \leq \sigma_{\phi}(X)$,
since~$\lambda > \frac{1}{2}$.
\end{proof}

\section{An integration by separation of variables}

Let~$X$ be a piecewise Riemannian finite connected~$2$--complex.  Let
\mbox{$\phi \co \pi_{1}(X) \to G$} be a nontrivial homomorphism to a
group~$G$.  We normalize the metric to unit relative systole:
$\sys(X,\phi)=1$.  The following area lower bound appeared
in~\cite[Lemma~7.3]{RS08}.

\begin{lemma} \label{lem:BS}
Let~$x \in X$,~$\lambda >0$ and~$\delta \in (0,\frac{1}{2})$.
If 
\begin{equation} \label{eq:BS}
\area \, B(x,r) \leq \lambda \, (\length \, S(x,r))^{2}
\end{equation}
for almost every~$r \in (\delta,\frac{1}{2})$, then 
$$
\area \, B(x,r) \geq \frac{1}{4\lambda} (r-\delta)^{2}
$$
for every~$r \in (\delta,\frac{1}{2})$.

In particular,~$\displaystyle \area(X) \geq \frac{1}{16 \lambda} \,
\sys(X,\phi)^{2}$.
\end{lemma}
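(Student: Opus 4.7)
The plan is to convert the geometric hypothesis into a first order differential inequality for the area function~$A(r) := \area\, B(x,r)$ and integrate it by separation of variables, as the title of this section suggests. By the coarea formula~\eqref{11b},~$A$ is the antiderivative of~$\ell(r) := \length\, S(x,r)$; in particular~$A$ is absolutely continuous and nondecreasing on~$[0, \tfrac{1}{2}]$, with~$A'(r) = \ell(r)$ almost everywhere. Under this identification, the hypothesis~\eqref{eq:BS} reads
\[
A(r) \leq \lambda\, A'(r)^{2} \qquad \text{for a.e. } r \in (\delta, \tfrac{1}{2}),
\]
which is equivalent to the first order inequality~$A'(r)/\sqrt{A(r)} \geq 1/\sqrt{\lambda}$ wherever~$A(r) > 0$.

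Next I would recognize the left hand side of this inequality as~$\frac{d}{dr}\bigl(2\sqrt{A(r)}\bigr)$ and integrate from~$\delta$ to an arbitrary~$r \in (\delta, \tfrac{1}{2})$. Discarding the nonnegative boundary term~$2\sqrt{A(\delta)}$ produces
\[
2\sqrt{A(r)} \geq \frac{r-\delta}{\sqrt{\lambda}},
\]
and squaring yields the desired bound~$A(r) \geq (r-\delta)^{2}/(4\lambda)$. For the concluding area estimate, let~$r \nearrow \tfrac{1}{2}$ (using~$\area(X) \geq A(r)$) and take~$\delta$ arbitrarily small, so that~$(r-\delta)^{2} \to \tfrac{1}{4}$; together with the normalization~$\sys(X,\phi) = 1$ this gives~$\area(X) \geq \tfrac{1}{16\lambda}\sys(X,\phi)^{2}$.

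The main technical obstacle is justifying the passage from~$A'/\sqrt{A}$ to~$\frac{d}{dr}\bigl(2\sqrt{A}\bigr)$ when~$A$ is only absolutely continuous and may vanish near~$r = \delta$. A standard workaround is to replace~$A$ by~$A + \varepsilon$, perform the calculation, and send~$\varepsilon \to 0$; alternatively, since~$A$ is monotone its zero set is an initial interval~$[0, r_{\ast}]$, and one may restart the integration at any point just past~$\max(\delta, r_{\ast})$ without affecting the conclusion.
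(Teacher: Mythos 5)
Your proof is correct and follows the paper's route exactly: both use the coarea formula to reduce the hypothesis to the pointwise differential inequality~$a(r) \leq \lambda\, a'(r)^2$, rewrite it as~$\bigl(\sqrt{a}\bigr)' \geq 1/(2\sqrt{\lambda})$, and integrate from~$\delta$ to~$r$. The regularity caveats you raise at the end are sensible but moot here: in a connected piecewise Riemannian~$2$--complex every positive-radius ball has positive area, so~$A(\delta)>0$ and~$\sqrt{A}$ is differentiable wherever~$A'$ is, which is also the only reason your alternative workaround (restarting past the zero set) leaves the conclusion intact.
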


\begin{proof}
By the coarea formula, we have
\begin{equation*}
a(r) := \area \, B(x,r) = \int_0^r \ell(s) \, ds
\end{equation*}
where~$\ell(s)=\length \, S(x,s)$.  Since the function~$\ell(r)$ is
piecewise continuous, the function~$a(r)$ is continuously
differentiable for all but finitely many~$r$ in~$(0,\frac{1}{2})$
and~$a'(r)=\ell(r)$ for all but finitely many~$r$
in~$(0,\frac{1}{2})$.  By hypothesis, we have
$$
a(r) \leq \lambda \, a'(r)^2
$$
for all but finitely many~$r$ in~$(\delta,\frac{1}{2})$.
That is,
$$ \left( \sqrt{a(r)} \right)' = \frac{a'(r)}{2 \sqrt{a(r)}} \geq
\frac{1}{2\sqrt{\lambda}}.~$$
We now integrate this differential inequality from~$\delta$ to~$r$, to
obtain
$$
 \sqrt{a(r)} \geq \frac{1}{2\sqrt{\lambda}} (r-\delta).
$$
Hence, for every~$r \in (\delta, \frac{1}{2})$, we obtain
\[
 a(r) \geq \frac{1}{4 \lambda} (r-\delta)^{2},
\]
completing the proof.
\end{proof}
\forget
, or
\begin{equation*}
dr \leq \lambda^{1/2} a^{-1/2} da.
\end{equation*}
We now integrate this differential inequality from~$\delta$ to~$r$, to
obtain
\begin{equation*}
r-\delta \leq \int_{a(\delta)}^{a(r)} \lambda^{1/2} a^{-1/2} da,
\end{equation*}
and hence
\begin{equation*}
r-\delta \leq 2 \lambda^{1/2} \left( a(r)^{1/2} - a(\delta^{1/2})
\right) \leq 2 \lambda^{1/2} a(r)^{1/2},
\end{equation*}
proving the result so long as we have the inequality for all the
intermediate values of~$r$.  

Why is that?
\forgotten

\section{Proof of relative systolic inequality}

We prove that if~$X$ is a~$\phi$--essential piecewise
Riemannian~$2$--complex which is almost minimal (up to~$\varepsilon$),
and has least first Betti number among such complexes, then~$X$ possesses
an~$r$--ball of large area for each~$r< \tfrac{1}{2} \sys(X, \phi)$.
We have not been able to find such a ball for an
arbitrary~$\phi$--essential complex (without the assumption of almost
minimality), but at any rate the area lower bound for almost minimal
complexes suffices to prove the~$\phi$--systolic inequality for
all~$\phi$--essential complexes, as shown below.

\forget
\begin{definition}
Let~$G$ be a group.  We set
\begin{equation*}
\sigma_{*}(G) = \inf_{X} \sigma_{\phi}(X),
\end{equation*}
where the infimum is over all~$\phi$--essential piecewise Riemannian
finite~$2$--complexes~$X$, where the homomorphism~$\phi$ has values
in~$G$.
\end{definition}
\forgotten

\begin{remark}
We do not assume at this point that~$\sigma_{*}(G)$ is nonzero,
\cf~Definition~\ref{def:sigma}.  In fact, the proof of
$\sigma_{*}(G)>0$ does not seem to be any easier than the explicit
bound of Corollary~\ref{coro:A}.
\end{remark}

Theorem~\ref{13} and Corollary~\ref{coro:A} are consequences of the
following result.

\begin{proposition} 
\label{prop:minB}
Let~$\varepsilon >0$.  Suppose~$X$ has a minimal first Betti number
among all~$\phi$--essential piecewise Riemannian~$2$--complexes
satisfying 
\begin{equation} \label{eq:eps}
\sigma_{\phi}(X) \leq \sigma_*(G) +\varepsilon.
\end{equation}  
Then each ball centered at a point~$x$ on a~$\phi$--systolic loop in~$X$
satisfies the area lower bound
\begin{equation*}
\area \, B(x,r) \geq
\frac{(r-\delta)^2}{2+\frac{\varepsilon}{\delta^2}}
\end{equation*}
for every~$r \in \left(\delta,\frac{1}{2}\sys(X,\phi) \right)$, where
$\delta \in \left(0,\frac{1}{2}\sys(X,\phi)\right)$.  In particular,
we obtain the bound
\begin{equation*}
\sigma_*(G) \geq \frac{1}{8}.
\end{equation*}
%
%
\end{proposition}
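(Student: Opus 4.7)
The plan is to argue by contradiction, combining the integration lemma, the fat-ball construction, and the minimality hypothesis on~$b_{1}(X)$.

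Normalize so that~$\sys(X,\phi)=1$ and fix~$\delta\in(0,\tfrac12)$. Set
\[
\lambda:=\tfrac12+\tfrac{\varepsilon}{4\delta^{2}}>\tfrac12,\qquad\tfrac{1}{4\lambda}=\tfrac{1}{2+\varepsilon/\delta^{2}},
\]
so the target bound reads~$\area\,B(x,r)\geq\tfrac{1}{4\lambda}(r-\delta)^{2}$. Suppose it fails at some~$r\in(\delta,\tfrac12)$. By the contrapositive of Lemma~\ref{lem:BS}, the hypothesis~\eqref{eq:BS} must fail on a set of positive measure, hence there is a regular value~$r_{0}\in(\delta,\tfrac12)$ with
\[
\area\,B(x,r_{0})>\lambda\,(\length\,S(x,r_{0}))^{2}.
\]

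First I would feed this fat ball into Proposition~\ref{prop:reeb} to construct~$Y=Y(x,r_{0})$ equipped with a piecewise flat metric satisfying~\eqref{eq:wh}, so that~$\sigma_{\psi}(Y)<\sigma_{\phi}(X)-(\lambda-\tfrac12)(\length\,S)^{2}$. Lemma~\ref{lem:class} gives a homomorphism~$\psi\co\pi_{1}(Y)\to G$ for which~$Y$ is~$\psi$--essential, and~\eqref{eq:b1} yields~$b_{1}(Y)\leq b_{1}(X)$. In particular~$\sigma_{\psi}(Y)\leq\sigma_{\phi}(X)\leq\sigma_{*}(G)+\varepsilon$, so~$Y$ is a competitor in the class in which~$X$ minimizes~$b_{1}$. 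If~$b_{1}(Y)<b_{1}(X)$ we already have our contradiction.

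The harder case is~$b_{1}(Y)=b_{1}(X)$, where I would modify~$Y$ by deleting one edge of the cone~$CA$ to drop the Betti number by one. The key geometric input is that~$x$ lies on a~$\phi$--systolic loop~$\gamma$ of length~$1$; since~$2r_{0}<1=\sys(X,\phi)$, the two points~$\gamma(\pm r_{0})\in S(x,r_{0})$ must lie in \emph{distinct} connected components~$S_{i},S_{j}$ of~$S$, because otherwise an arc of~$S_{i}$ joining them could be used to shortcut~$\gamma$ into a strictly shorter~$\phi$--noncontractible loop, contradicting the normalization~$\sys(X,\phi)=1$ (the slack~$\lambda>\tfrac12$ and the coarea estimate are what let one bound the length of that arc). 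Consequently, in~$Y$ the two cone vertices~$x_{i}\neq x_{j}$ are joined by two internally disjoint paths: one through the apex of~$CA$, and one through the arc of~$\gamma\cap(X\setminus B)$ living in~$Y\setminus CA$. Removing the cone edge from the apex to~$x_{i}$ yields~$Y'$ with~$b_{1}(Y')=b_{1}(Y)-1<b_{1}(X)$,~$\area(Y')\leq\area(Y)$ and~$\sys(Y',\psi)\geq\sys(Y,\psi)\geq 1$; since the classifying map~$Y\to K(G,1)$ factors through this one-edge collapse (the deleted open edge is homotopically inessential within the cone),~$Y'$ remains~$\psi$--essential. Thus~$\sigma_{\psi}(Y')\leq\sigma_{*}(G)+\varepsilon$ with strictly smaller first Betti number, again contradicting the minimality of~$b_{1}(X)$.

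The main obstacle I expect is the distinct-component claim together with verifying that the edge removal preserves~$\psi$--essentialness; once this is in place, the proof of the main inequality is complete. For the \emph{in particular} statement, note that the proposition applies to~$r$ arbitrarily close to~$\tfrac12\sys(X,\phi)$; letting~$\varepsilon\downarrow 0$ while choosing~$\delta=\delta(\varepsilon)\downarrow 0$ with~$\varepsilon/\delta^{2}\downarrow 0$ (e.g.~$\delta=\varepsilon^{1/3}$) and using~$\area(X)\geq\area\,B(x,r)$, one obtains~$\area(X)\geq\tfrac18\sys(X,\phi)^{2}$ for every~$\phi$--essential~$X$, hence~$\sigma_{*}(G)\geq\tfrac18$.
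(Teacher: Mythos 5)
Your proposal follows the paper's argument closely: normalize, dichotomize into a ``thin-balls'' case handled by the integration Lemma~\ref{lem:BS} and a ``fat-ball'' case handled via Proposition~\ref{prop:reeb}, Lemma~\ref{lem:class}, and an edge removal that drops $b_1$. The overall structure, including the distinct-component argument via shortcutting the systolic loop and the final limiting argument $\delta=\varepsilon^{1/3}$ for the ``in particular'' clause, is the same as in the paper. Three points deserve comment.

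First, you set $\lambda=\tfrac12+\tfrac{\varepsilon}{4\delta^2}$ exactly, but the contradiction step requires the \emph{strict} inequality $\varepsilon < 4(\lambda-\tfrac12)\delta^2$ so that combining $\sigma_*(G)\leq\sigma_\psi(Y)$, inequality~\eqref{eq:wh}, and hypothesis~\eqref{eq:eps} yields $(\lambda-\tfrac12)(\length\,S)^2<\varepsilon$ and hence $\length\,S < 2\delta\leq 2r_0$. The paper takes $\lambda$ strictly larger than $\tfrac12+\tfrac{\varepsilon}{4\delta^2}$ (but arbitrarily close), and obtains the stated constant in the limit. You should make the same adjustment; otherwise the shortcut arc $\alpha_1\subset S$ need not be strictly shorter than the arc $\alpha_0$ of $\gamma$ inside $B$.

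Second, your derivation of $\length\,S<2r_0$ is only gestured at (``the slack $\lambda>\tfrac12$ and the coarea estimate\ldots''). The actual mechanism is the sandwich $\sigma_*(G)\leq\sigma_\psi(Y)<\sigma_\phi(X)-(\lambda-\tfrac12)(\length\,S)^2\leq\sigma_*(G)+\varepsilon-(\lambda-\tfrac12)(\length\,S)^2$, which uses the \emph{lower} bound $\sigma_*(G)\leq\sigma_\psi(Y)$ as much as the upper bound; this should be stated explicitly.

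Third, your reasoning for why $Y'=Y\setminus e$ remains $\psi$-essential is not right as worded. The removed edge is \emph{not} ``homotopically inessential''---removing it drops $b_1$ precisely because it lies on a cycle---and the classifying map does not literally factor through a collapse onto the subcomplex $Y'$. The correct argument, which the paper uses, is the homotopy equivalence $Y\simeq Y'\vee S^1$ of~\eqref{eq:Y'}: if $Y'\to K(G,1)$ could be homotoped into the $1$--skeleton, then so could $Y\to K(G,1)$ (the wedge circle factor always can be), contradicting the $\psi$-essentialness of $Y$ from Lemma~\ref{lem:class}. Also note $\area(Y')=\area(Y)$ exactly, since only a $1$-cell is removed. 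With these repairs your sketch coincides with the paper's proof.
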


\forget
\begin{proof}
If for each~$r$ we have~$a(r) \leq a'(r)$ then we separate variables
as in the previous section to obtain the area lower bound.

If for some~$r$, we have~$a(r) > a'(r)$, then there are two
possibilities: either~$S(r)$ is connected, and we obtain a lower bound
of~$r^2$ for the area by Hebda's trick, or~$S(r)$ is disconnected.
But the latter case is impossible by the hypothesis if minimality of
Betti number.
\end{proof}

We can now proceed with the proof of the relative systolic inequality
for essential~$2$--complexes.
\forgotten

\begin{proof}
We will use the notation and results of the previous sections.
Choose~$\lambda > 0$ such that
\begin{equation}
\label{52}
\varepsilon < 4 \left(\lambda - \tfrac{1}{2} \right) \delta^{2}.
\end{equation}
That is,
\[
\lambda > \frac{1}{2} + \frac{\varepsilon}{4 \delta^2} \quad \mbox{
(close to } \frac{1}{2} + \frac{\varepsilon}{4 \delta^2}).
\]
We normalize the metric on~$X$ so that its~$\phi$--systole is equal to one.
Choose a point~$x_{0} \in X$ on a~$\phi$--systolic loop~$\gamma$ of~$X$.  

If the balls centered at~$x_0$ are too ``thin'',
i.e., the inequality~\eqref{eq:BS} is satisfied for~$x_{0}$ and almost
every~$r \in (\delta,\frac{1}{2})$, then the result follows from
Lemma~\ref{lem:BS}.  

We can therefore assume that there exists a ``fat'' ball centered at~$x_0$, i.e., the hypothesis of Proposition~\ref{prop:reeb} holds
for~$x_{0}$ and some regular~$f$--value~$r_{0} \in
(\delta,\frac{1}{2})$, where~$f$ is the distance function from~$x_0$.
(Indeed, almost every~$r$ is regular for~$f$.)
Arguing by contradiction, we show that the assumption on the minimality of the first Betti number rules out this case.

We would like to construct a~$\psi$--essential piecewise
flat~$2$--complex~$Y'$ with~$b_1(Y') < b_1(X)$ such that
$\sigma_{\psi}(Y') \leq \sigma_{\phi}(X)$ and therefore
\begin{equation}
\sigma_{\psi}(Y') \leq \sigma_{*}(G) + \varepsilon
\end{equation}
for some homomorphism~$\psi \co \pi_{1}(Y') \to G$.

By Lemma~\ref{lem:class} and Proposition~\ref{prop:reeb}, the space~$Y = Y(x_{0},r_{0})$, endowed with the piecewise Riemannian metric of Proposition~\ref{prop:reeb}, satisfies
\begin{equation*}
\sigma_{*}(G) \leq \sigma_{\psi}(Y) \leq \sigma_{\phi}(X).
\end{equation*}
Combined with the inequalities~\eqref{eq:wh} in the proof of
Proposition~\ref{prop:reeb} and~\eqref{eq:eps}, this yields
\begin{equation*}
\left( \lambda - \frac{1}{2} \right) (\length \, S)^{2} < \varepsilon.
\end{equation*}
From~$\varepsilon < 4 (\lambda - \frac{1}{2}) \delta^{2}$
and~$\delta \leq r_{0}$, we deduce that
$$
\length \, S < 2 r_{0}.
$$

Now, by Lemma~\ref{lem:trivial}, the~$\phi$--systolic
loop~$\gamma\subset X$ does not entirely lie in~$B$.  Therefore, there
exists an arc~$\alpha_0$ of~$\gamma$ passing through~$x_{0}$ and lying
in~$B$ with endpoints in~$S$.  We have
\[
\length(\alpha_0) \geq 2r_{0}.
\]
If the endpoints of~$\alpha_0$ lie in the same connected component
of~$S$, then we can join them by an arc~$\alpha_1 \subset S$ of length
less than~$2r_{0}$.  By Lemma~\ref{lem:trivial}, the loop~$\alpha_0
\cup \alpha_1$, lying in~$B$, is~$\phi$--contractible.  Therefore, the
loop~$\alpha_1 \cup (\gamma \setminus \alpha_0)$, which is shorter
than~$\gamma$, is~$\phi$--noncontractible.  Hence a contradiction.

This shows that the~$\phi$--systolic loop~$\gamma$ of~$X$ meets two
connected components of~$S$. 

Since a~$\phi$--systolic loop is length-minimizing, the loop~$\gamma$
intersects~$S$ exactly twice.  Therefore, the complementary
arc~$\alpha=\gamma \setminus \alpha_0$, joining two connected
components of~$S$, lies in~$X \setminus B$.
%
%
The two endpoints of~$\alpha$ are connected by a length-minimizing arc
of~$Y \setminus (X \setminus \overline{B})$ passing exactly through
two edges of the cone~$CA$.

Let~$Y'$ be the~$2$--complex obtained by removing the interior of one
of these two edges from~$Y$.  The complex~$Y'=Y \setminus e$ is
clearly connected and the space~$Y$, obtained by gluing back the
edge~$e$ to~$Y$, is homotopy equivalent to~$Y' \vee S^1$.  That is,
\begin{equation} \label{eq:Y'}
Y \simeq Y' \vee S^1.
\end{equation}
Thus,~$Y'$ is~$\psi$--essential if we still denote by~$\psi$ the
restriction of the homomorphism~$\psi \co \pi_1(Y) \to G$
to~$\pi_1(Y')$.  Furthermore, we clearly have
\[
\sigma_{\psi}(Y') = \sigma_{\psi}(Y) \leq \sigma_{\phi}(X).
\]
Combined with~\eqref{eq:b1}, the homotopy equivalence~\eqref{eq:Y'}
also implies
$$
b_1(Y') < b_1(Y) \leq b_1(X).
$$
Hence the result.
\end{proof}

\begin{remark}
We could use round metrics (of constant positive Gaussian curvature)
on the ``buffer cylinders" of the space~$Y$ in the proof of
Proposition~\ref{prop:reeb}.  This would allow us to choose~$\lambda$
close to~$\frac{1}{2 \pi}$ and to derive the lower bound
of~$\frac{\pi}{8}$ for~$\sigma_{\phi}(X)$ in Corollary~\ref{coro:A}.
We chose to use flat metrics for the sake of simplicity.
\end{remark}

\section{Cohomology of Lens spaces}

Let~$p$ be a prime number.  The group~$G=\Z_p$ acts freely on the
contractible sphere~$S^{2\infty+1}$ yielding a model for the
classifying space
\begin{equation*}
K = K(\Z_{p},1) = S^{2\infty+1}/\Z_{p}.
\end{equation*}
The following facts are well-known, \cf~\cite{Hat}.

\begin{proposition}
\label{42}

The cohomology ring~$H^*(\Z_p;\Z_p)$ for~$p$ an odd prime is the
algebra~$\Z_p(\alpha)[\beta]$ which is exterior on one
generator~$\alpha$ of degree~$1$, and polynomial with one
generator~$\beta$ of degree~$2$.  Thus,
\begin{itemize}
\item
$\alpha$ is a generator of~$H^1(\Z_p;\Z_p)\simeq \Z_p$,
satisfying~$\alpha^2=0$;
\item
$\beta$ is a generator of~$H^2(\Z_p;\Z_p)\simeq \Z_p$.
\end{itemize}
\end{proposition}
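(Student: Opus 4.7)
My plan is to work with the standard infinite-lens-space model $K=S^{\infty}/\mathbb{Z}_p$ and compute its cohomology ring by first pinning down the additive structure via a cell decomposition, and then handling the multiplicative structure in two separate pieces: the vanishing $\alpha^2=0$, which is essentially formal, and the identification of the polynomial generator $\beta$, which is the substantive part.

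First I would equip $S^{2n+1}\subset\mathbb{C}^{n+1}$ with the standard $\mathbb{Z}_p$-action by multiplication by the $p$-th root of unity and realize $S^{\infty}$ as a CW complex with one cell in each dimension that is $\mathbb{Z}_p$-equivariant (two hemispheres per dimension grouped into orbits of size $p$). Passing to the quotient, $K$ inherits a CW structure with a single $k$-cell $e^k$ for every $k\geq 0$. A direct computation of the boundary operator (counting orbit preimages as $1+t+t^2+\cdots+t^{p-1}$, where $t$ generates $\mathbb{Z}_p$) gives the cellular chain complex
\begin{equation*}
\mathbb{Z}\xrightarrow{\,0\,}\mathbb{Z}\xrightarrow{\,p\,}\mathbb{Z}\xrightarrow{\,0\,}\mathbb{Z}\xrightarrow{\,p\,}\mathbb{Z}\xrightarrow{\,0\,}\cdots
\end{equation*}
Dualizing with $\mathbb{Z}_p$ coefficients kills all boundaries, so $H^k(\mathbb{Z}_p;\mathbb{Z}_p)\cong\mathbb{Z}_p$ for each $k\geq 0$. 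Let $\alpha\in H^1$ be any generator.

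Next, graded commutativity of the cup product gives $\alpha\smile\alpha=(-1)^{1\cdot 1}\alpha\smile\alpha$, hence $2\alpha^2=0$; since $p$ is odd, $2$ is a unit in $\mathbb{Z}_p$ and therefore $\alpha^2=0$. This accounts for the exterior factor $\Lambda_{\mathbb{Z}_p}(\alpha)$.

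The main obstacle is to identify a class $\beta\in H^2$ whose powers $\beta^k$ generate $H^{2k}$ and to rule out any extra multiplicative relations. I would define $\beta$ to be the Bockstein $\delta\alpha$ associated with the coefficient sequence $0\to\mathbb{Z}_p\to\mathbb{Z}_{p^2}\to\mathbb{Z}_p\to 0$; from the cellular complex above one sees directly that $\delta\co H^1(\mathbb{Z}_p;\mathbb{Z}_p)\to H^2(\mathbb{Z}_p;\mathbb{Z}_p)$ is an isomorphism, so $\beta$ generates $H^2$. To handle higher powers I would use the principal circle bundle $S^1\to K\to\mathbb{C}\mathrm{P}^{\infty}$ obtained by first quotienting $S^{\infty}$ by $S^1$ and factoring through the $\mathbb{Z}_p$-action. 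The Gysin sequence, combined with the known ring $H^*(\mathbb{C}\mathrm{P}^{\infty};\mathbb{Z}_p)=\mathbb{Z}_p[x]$ with $|x|=2$, shows that the pullback of $x$ is exactly $\beta$ and that multiplication by $\beta$ induces isomorphisms $H^k(K;\mathbb{Z}_p)\xrightarrow{\cong} H^{k+2}(K;\mathbb{Z}_p)$ in the range where the Euler class acts injectively. It follows inductively that $\beta^k$ generates $H^{2k}$ and $\alpha\beta^k$ generates $H^{2k+1}$, so the natural algebra map $\Lambda_{\mathbb{Z}_p}(\alpha)\otimes\mathbb{Z}_p[\beta]\to H^*(\mathbb{Z}_p;\mathbb{Z}_p)$ is a bijection on each graded piece, hence an isomorphism of graded rings. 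The delicate point — and the step I would expect to verify most carefully — is that the Euler class of the circle bundle really equals the Bockstein $\beta$ up to a unit in $\mathbb{Z}_p$; this can be checked either by restricting to a finite lens space $L^{2n+1}=S^{2n+1}/\mathbb{Z}_p$ and using Poincaré duality there, or by appealing to the standard identification of Bockstein with the mod-$p$ reduction of the first Chern class.
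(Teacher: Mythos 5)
The paper does not actually prove this proposition: it states it as well-known and cites Hatcher.  Your proposal is a genuine proof sketch, and it is essentially correct, but it follows a different route from the standard reference.  The cell-structure computation of the additive groups and the graded-commutativity argument for $\alpha^2 = 0$ (using that $2$ is a unit in $\Z_p$ for $p$ odd) are exactly right and are the same as in any account.  Where you diverge is in the identification of $\beta$: Hatcher's Example 3.41 uses Poincar\'e duality on the finite lens spaces $L^{2n+1}=S^{2n+1}/\Z_p$ to force the cup-product pairing $H^k\otimes H^{2n+1-k}\to H^{2n+1}\simeq\Z_p$ to be nondegenerate, which pins down that $\beta^k$ and $\alpha\beta^k$ generate; you instead use the Gysin sequence of the circle bundle $S^1\to K\to \mathbb{CP}^\infty$.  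Both work; the Gysin route has the advantage of handling all degrees at once in the infinite-dimensional model, while the Poincar\'e-duality route is more elementary in that it avoids spectral-sequence/Gysin machinery.

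One point in your Gysin step is imprecise and worth fixing.  You write that multiplication by $\beta$ is an isomorphism ``in the range where the Euler class acts injectively.''  But with $\Z_p$ coefficients the Euler class of this circle bundle \emph{vanishes} (it is the mod-$p$ reduction of $p\cdot x\in H^2(\mathbb{CP}^\infty;\Z)$), so ``the range where it acts injectively'' is empty.  What actually happens is that the vanishing of the Euler class splits the Gysin sequence into short exact sequences
\begin{equation*}
0\longrightarrow H^n(\mathbb{CP}^\infty;\Z_p)\xrightarrow{\ \pi^*\ } H^n(K;\Z_p)\longrightarrow H^{n-1}(\mathbb{CP}^\infty;\Z_p)\longrightarrow 0
\end{equation*}
of $\Z_p[x]$-modules, exhibiting $H^*(K;\Z_p)$ as a free $\Z_p[\beta]$-module on generators $1$ and a degree-one class, necessarily $\alpha$ up to a unit.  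Together with $\alpha^2=0$ this gives the ring structure directly.  (Alternatively, run the Gysin sequence with $\Z$ coefficients, where the Euler class is $p\cdot x$ and genuinely acts injectively, and then apply universal coefficients; that is presumably what you had in mind, but the coefficient ring should be made explicit.)  Finally, the identification of $\beta$ with the Bockstein of $\alpha$, which you flag as the delicate point, is a nice fact to have but is not actually needed for the proposition as stated: any generator of $H^2$ will do, so you can simply take $\beta=\pi^*x$ and be done.
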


Here the~$2$--dimensional class is the image under the Bockstein
homomorphism of the~$1$--dimensional class.  The cohomology of the
cyclic group is generated by these two classes.  The cohomology is
periodic with period~$2$ by Tate's theorem.  Every even-dimensional
class is proportional to~$\beta^n$.  Every odd-dimensional class is
proportional to~$\alpha \cup \beta^n$.

Furthermore, the reduced integral homology is~$\Z_p$ in odd dimensions
and vanishes in even dimensions.  The integral cohomology is~$\Z_p$ in
even positive dimensions, generated by a lift of the class~$\beta$
above to~$H^2(\Z_p;\Z)$.

\begin{proposition}
\label{41}
\label{33}
Let~$M$ be a closed~$3$--manifold~$M$ with~$\pi_1(M)=\Z_{p}$.  Then its
classifying map~$\varphi \co M \to K$ induces an
isomorphism
\[
\varphi_i \co H_i(M;\Z_p)\simeq H_i(K;\Z_p)
\]
for~$i=1,2,3$.
\end{proposition}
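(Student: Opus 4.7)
My plan is to apply the Serre spectral sequence to the Borel fibration
\[
\tilde M \to M \stackrel{\varphi}{\longrightarrow} K,
\]
which presents $M$ as $\tilde M\times_{\Z_p} E\Z_p$ fibered over $B\Z_p=K$, where $\tilde M\to M$ is the universal cover. Since $\pi_1(M)=\Z_p$ is finite, $\tilde M$ is a closed simply connected $3$-manifold, so Hurewicz and Poincar\'e duality give $H^{*}(\tilde M;\Z_p)\cong H^{*}(S^3;\Z_p)$. I would then verify that the local coefficient system on $K$ is simple: the deck $\Z_p$-action on $\tilde M$ preserves orientation (automatic for odd $p$, since $\operatorname{Hom}(\Z_p,\Z_2)=0$, and present as part of the hypothesis for $p=2$ via Theorem~\ref{theo:main}), hence acts trivially on $H^{*}(\tilde M;\Z_p)$.

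With these inputs, the $E_2$-page $E_2^{a,b}=H^a(K;H^b(\tilde M;\Z_p))$ is concentrated in rows $b=0$ and $b=3$, each equal to $\Z_p$ in every column $a\ge 0$ by Proposition~\ref{42}. The only possibly nontrivial differential is the transgression $d_4\co E_4^{a,3}\to E_4^{a+4,0}$, and the key step is to force $d_4$ to be an isomorphism for every $a\ge 0$ by a degree count. Since $M$ is $3$-dimensional, $H^n(M;\Z_p)=0$ for $n\ge 4$, and with the two-row shape of $E_2$ there is no other mechanism to cancel either row; hence both the kernel and cokernel of $d_4$ must vanish in every bidegree. Consequently $E_\infty^{a,3}=0$ for all $a\ge 0$ and $E_\infty^{a,0}=E_2^{a,0}=\Z_p$ for $a\le 3$, so the edge homomorphism $\varphi^{*}\co H^a(K;\Z_p)\to H^a(M;\Z_p)$ is an isomorphism for $a=0,1,2,3$.

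Finally I would pass to homology. Since every group in play is a $\Z_p$-vector space, the universal coefficient theorem identifies $H_i(-;\Z_p)$ functorially with the $\Z_p$-dual of $H^i(-;\Z_p)$, and dualizing the cohomological isomorphism produces the desired $\varphi_i\co H_i(M;\Z_p)\stackrel{\cong}{\to} H_i(K;\Z_p)$ for $i=1,2,3$.

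The main obstacle is the spectral-sequence bookkeeping in the middle paragraph: one must be confident that the transgression $d_4$ is genuinely forced to be an isomorphism in every relevant bidegree, and that the surviving column indeed represents $\varphi^{*}$ at the edge. Both statements are standard once the two-row shape of $E_2$ and the dimension $\dim M = 3$ are in place, but this is the pivotal observation driving the whole proof.
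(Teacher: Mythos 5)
Your proof is correct but takes a genuinely different route from the paper's. The paper argues more elementarily: for $i\le 2$ it uses that $\varphi$ is $3$--connected (it is an isomorphism on $\pi_1$, and $\pi_2(M)=0=\pi_2(K)$), so Whitehead's theorem gives the isomorphism on $H_i$ with any coefficients; for $i=3$ it invokes the Hopf-type exact sequence
\[
\pi_3(M)\overset{\times p}{\longrightarrow} H_3(M;\Z)\longrightarrow H_3(\Z_p;\Z)\longrightarrow 0,
\]
identifies the second arrow with the quotient $\Z\twoheadrightarrow\Z_p$, and reduces mod~$p$. You instead run the Serre spectral sequence of the Borel fibration $\tilde M\to M\to K$ and let the two-row $E_2$-page together with $\dim M=3$ force $d_4$ to be an isomorphism in every bidegree; the edge homomorphism then gives $\varphi^{*}$ in degrees $\le 3$, and field-coefficient duality transports this to homology. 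Both are valid: the paper's version is shorter and avoids spectral sequences, while yours is more uniform (one mechanism handles all three degrees at once), and the forced $d_4$-isomorphism in every degree is exactly the Gysin mechanism behind the $2$--periodicity of $H^{*}(\Z_p;\Z_p)$ recorded in Proposition~\ref{42}. One simplification to your write-up: the aside on orientability for $p=2$ is unnecessary, since the coefficient module $H^{3}(\tilde M;\Z_p)\cong\Z_p$ has automorphism group $\Z_p^{*}$ of order $p-1$, coprime to $p$, so the monodromy action of $\Z_p$ on it is trivial for \emph{every} prime $p$ regardless of orientation. In fact your argument thereby covers non-orientable $M$ as well, whereas the paper's Hopf-sequence step tacitly uses $H_3(M;\Z)\cong\Z$; this is harmless in context, since the proposition is only applied for $p$ odd, where orientability of $M$ is automatic.
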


\begin{proof}
Since~$M$ is covered by the sphere, for~$i=2$ the isomorphism is a
special case of Whitehead's theorem.  Now consider the exact sequence
(of Hopf type)
\begin{equation*}
\pi_3(M) \overset{\times p}{\longrightarrow}H_3(M;\Z)\to
H_3(\Z_p;\Z)\to 0
\end{equation*}
since~$\pi_2(M)=0$.  Since the homomorphism~$H_3(M;\Z) \to
H_3(\Z_p;\Z)$ is onto, the result follows by reduction modulo~$p$.
\end{proof}

\section{Volume of a ball}

Our Theorem \ref{theo:main} is a consequence of the following result.

\begin{theorem} 
\label{theo:ball}
Assume the~$\rm{GG}_C$-property~\eqref{eq:ball} is satisfied for some universal constant
$C>0$ and every homomorphism~$\phi$ into a finite group~$G$. 
Then every closed
Riemannian~$3$--manifold~$M$ with fundamental group~$G$ contains a
metric ball~$B(R)$ of radius~$R$ satisfying
\begin{equation}
\label{24}
\vol \, B(R) \geq \frac{C}{3} R^3,
\end{equation}
for every~$R\leq\frac{1}{2}\sys(M)$.
\end{theorem}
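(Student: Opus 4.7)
The plan is to adapt the slicing/coarea argument of Burago--Zalgaller--Hebda to the three-dimensional setting, using the GG$_C$-hypothesis as a black box on the two-dimensional distance spheres of $M$.

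Fix an arbitrary basepoint $x_0\in M$ and let $f(\cdot)=d(x_0,\cdot)$ be the distance function from $x_0$. For almost every regular value $r\in (0,R]$, the level set $\Sigma_r:=f^{-1}(r)$ is a piecewise-flat 2-complex embedded in $M$, and the coarea formula reads
\[
\vol B(x_0,R)=\int_0^R \area(\Sigma_r)\,dr.
\]
It therefore suffices to establish the pointwise lower bound $\area(\Sigma_r)\ge C r^2$ for almost every regular $r\in (0,R)$, since integration then yields $\vol B(x_0,R)\ge \tfrac{C}{3}R^3$, which is the desired volume bound \eqref{24}.

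Let $\varphi\colon M\to K(G,1)$ be the classifying map of $M$, and let $\phi_r\colon\pi_1(\Sigma_r)\to G$ be the homomorphism obtained by composing the inclusion $\Sigma_r\hookrightarrow M$ with $\varphi_*$. First, every $\phi_r$-noncontractible loop in $\Sigma_r$ descends to a $\varphi_*$-noncontractible loop of the same length in $M$, so $\sys(\Sigma_r,\phi_r)\ge \sys(M)\ge 2R\ge 2r$. Granting that $\Sigma_r$ is $\phi_r$-essential, the hypothesis GG$_C$ applied to $(\Sigma_r,\phi_r)$ produces a point $y_r\in\Sigma_r$ with
\[
\area(\Sigma_r)\ \ge\ \area_{\Sigma_r}\!\bigl(B_{\Sigma_r}(y_r,r)\bigr)\ \ge\ C r^2,
\]
which is exactly what is needed.

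The main obstacle, and the place where I expect the argument to take real work, is the $\phi_r$-essentialness of the slice $\Sigma_r$. The strategy is obstruction-theoretic, mimicking Lemma \ref{lem:class}: since $r\le \tfrac{1}{2}\sys(M)$, Lemma \ref{lem:trivial} forces $\varphi|_{B(x_0,r)}$ to be null-homotopic in $K(G,1)$, so $\varphi$ factors (up to homotopy) through the Puppe-type quotient $\pi\colon M\to M/B(x_0,r)$, much as in diagram \eqref{eq:XB}. If in addition $\varphi|_{\Sigma_r}$ could be homotoped into the 1-skeleton $K(G,1)^{(1)}$, one would piece the two homotopies together---using $\pi_2(K^{(1)})=0$ to extend across the 2-skeleton of the complement---to push the global classifying map into a 2-complex, contradicting the nontriviality of $\varphi_*[M]\in H_3(G;\Z)$ guaranteed by Proposition \ref{33} (after passing, if necessary, to the $\Z_p$-subcover corresponding to a Sylow-$p$ subgroup $\Z_p\subset G$, to which balls of radius $\le\tfrac{1}{2}\sys(M)$ lift isometrically). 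Making this obstruction argument fully rigorous in the piecewise-flat category, and verifying it for almost every regular $r$ rather than just one favorable value, is the delicate part; the remainder of the proof is the clean slicing/coarea/GG sandwich described above.
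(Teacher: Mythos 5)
Your plan breaks down at the step you yourself flag as delicate: the claim that the distance sphere $\Sigma_r=f^{-1}(r)$ is $\phi_r$--essential for (almost every) regular $r$. This is not a matter of making an obstruction argument ``fully rigorous''; the claim is simply false in general. For small $r$ the distance sphere in a $3$--manifold is a $2$--sphere, so $\pi_1(\Sigma_r)=0$ and $\phi_r$ is the trivial homomorphism, which makes $\phi_r$--essentialness impossible. Even for larger $r$ there is no reason for the restriction of the classifying map to a distance sphere to be nontrivial. Your proposed obstruction argument does not patch this: knowing that $\varphi|_{B}$ is null and that $\varphi|_{\Sigma_r}$ lands in $K(G,1)^{(1)}$ tells you nothing about $\varphi$ on the genuinely $3$--dimensional piece $M\setminus B$, so you cannot conclude that $\varphi$ factors through a $2$--complex, and there is no contradiction with $\varphi_*[M]\ne 0$. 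Consequently the intended bound $\area(\Sigma_r)\geq Cr^2$ for arbitrary $x_0$ and almost every $r$ is not available, and the coarea integration has nothing to integrate.

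The paper's proof sidesteps exactly this obstruction by \emph{not} applying the GG--property to the slices $\Sigma_r$. Instead it fixes a $2$--cycle $D$ with $\Z_p$ coefficients representing a nonzero class in $H_2(M;\Z_p)$, chosen to be area--minimizing (up to $\varepsilon$) for the multiplicity--ignoring area of Definition~\ref{def:Vol}. The support $D_0$ \emph{is} $\phi$--essential --- this follows from Proposition~\ref{33}, because $[D]$ maps to a nonzero class in $H_2(\Z_p;\Z_p)$, something a random distance sphere cannot offer. The GG--property then gives a \emph{specific} center $x\in D_0$ with $\area(D_0\cap B(x,R))\geq CR^2$. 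The area bound is transferred from $D_0\cap B$ to the distance sphere $S(x,R)$ via Lemma~\ref{lem:DB} (Lefschetz--Poincar\'e duality forces $[D\cap B]=0$ relative to $\partial B$) together with the near--minimality of $D$: replacing $D_-=D\cap\bar B$ by a chain $\mathcal{C}\subset S(x,R)$ cannot save area, so $\area(S(x,R))\geq\area(\mathcal{C})\geq\area(D_-)-\varepsilon\geq CR^2-\varepsilon$. This cut--and--paste step, where the multiplicity--blind notion of area is essential, is the heart of the proof and is entirely absent from your proposal. The reduction to $G=\Z_p$ via a cover, which you mention only parenthetically, is also needed, but the missing ingredient is the minimal cycle, not the covering trick.
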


\forget
Recall the following result.

\begin{proposition}
\label{006}
In an orientable~$3$--manifold, cup product on~$H^1\otimes H^2$ in
cohomology with~$\Z_p$ coefficients is dual to intersection between
a~$2$--cycle and a ~$1$--cycle with coefficients in~$\Z_p$.
\end{proposition}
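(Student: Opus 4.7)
The plan is to derive the statement from the standard cap-product/Poincar\'e duality package applied to the closed orientable $3$--manifold $M$ with $\Z_p$ coefficients. First I would fix the $\Z_p$--fundamental class $[M]\in H_3(M;\Z_p)$ determined by the orientation, which exists since $M$ is closed and orientable, and invoke the Poincar\'e duality isomorphism
$$PD\colon H^k(M;\Z_p)\xrightarrow{\;\sim\;} H_{3-k}(M;\Z_p),\qquad \alpha\mapsto \alpha\frown [M],$$
for $k=1,2$. Thus $PD(\alpha)\in H_2(M;\Z_p)$ is the $2$--cycle dual to $\alpha\in H^1$, and $PD(\beta)\in H_1(M;\Z_p)$ is the $1$--cycle dual to $\beta\in H^2$.

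The core algebraic input is the cap/cup adjunction
$$(\alpha\smile \beta)\frown [M]\;=\;\alpha\frown(\beta\frown [M]),$$
a standard identity for cap products. Passing to the Kronecker pairing in top degree, this yields $\langle\alpha\smile\beta,\,[M]\rangle=\langle\alpha,\,PD(\beta)\rangle$. I would then invoke the Poincar\'e--dual description of the intersection pairing: for $a=PD(\alpha)\in H_2(M;\Z_p)$ and $c\in H_1(M;\Z_p)$, the intersection number $a\cdot c\in\Z_p$ equals the Kronecker pairing $\langle\alpha,c\rangle$. Setting $c=PD(\beta)$ gives
$$PD(\alpha)\cdot PD(\beta)\;=\;\langle\alpha,\,PD(\beta)\rangle\;=\;\langle\alpha\smile\beta,\,[M]\rangle,$$
which is precisely the asserted duality between the cup product on $H^1\otimes H^2$ and the intersection pairing between a $2$--cycle and a $1$--cycle.

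The only delicate point---rather than a genuine obstacle---is confirming that the algebraic intersection pairing defined via Poincar\'e duality agrees with the geometric one used elsewhere in the paper, namely the signed count mod $p$ of transverse intersection points of cycle representatives. For a smooth closed $3$--manifold this identification is standard: one represents the $2$-- and $1$--cycles by mod-$p$ submanifold chains in general position, and the number of their intersection points equals the cap-product evaluation, either via a Thom-class computation on a tubular neighborhood of the $1$--cycle or by direct inspection on a compatible triangulation. Granting this identification, the proposition follows at once from the two displayed equations above.
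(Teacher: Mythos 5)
Your argument is correct, and it is the standard one: $\Z_p$--Poincar\'e duality via cap product with $[M]\in H_3(M;\Z_p)$, the adjunction $(\alpha\smile\beta)\frown[M]=\alpha\frown(\beta\frown[M])$, the Kronecker pairing, and finally the identification of the resulting algebraic pairing with the geometric mod~$p$ count of transverse intersection points, which you rightly single out as the only delicate step. For comparison: the paper does not prove this proposition at all. It is stated as a recalled background fact, accompanied only by the remark that the global orientation allows one to count an \emph{integer} intersection index which is then reduced modulo~$p$ (and in the source the statement actually sits inside a \texttt{forget} block, so it is excised from the compiled text; where the duality is actually used, in Lemma~\ref{lem:DB}, the paper simply invokes Lefschetz--Poincar\'e duality for the ball relative to its boundary). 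So the paper's implicit route is to work at chain level with transverse representatives, assign integer signs via the orientation, and reduce mod~$p$, whereas you stay with $\Z_p$ coefficients throughout at the level of homology classes. Each has its merits: the paper's one-line remark sidesteps any discussion of $\Z_p$ fundamental classes and cap products, while your version is self-contained, makes the duality bookkeeping explicit, and does not require choosing transverse representatives until the final comparison. Two small points to keep in mind: you tacitly assume $M$ is closed (needed for $[M]$ and for your form of duality), which is consistent with the paper's usage but not literally stated in the proposition; and for the geometric identification one should remember that mod~$p$ cycles in a $3$--manifold need not be honest submanifolds (the paper itself raises this issue via Sullivan's $\Z_p$--manifolds), so the transversality argument should be phrased, as you essentially do, for general-position simplicial or piecewise-smooth representatives rather than embedded surfaces and curves.
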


Here the global orientation allows one to count an integer
intersection index, which is then reduced modulo~$p$. \\
\forgotten

We will first prove Theorem~\ref{theo:ball} for a
closed~$3$--manifold~$M$ of fundamental group~$\Z_{p}$, with~$p$
prime.  We assume that~$p$ is odd (the case~$p=2$ was treated by
L.~Guth).  In particular,~$M$ is orientable.  Let~$D$ be a~$2$--cycle
representing a nonzero class~$[D]$ in
\begin{equation*}
H_2(M;\Z_p) \simeq H_{1}(M;\Z_{p}) \simeq \Z_p.
\end{equation*}
Denote by~$D_0$ the finite~$2$--complex of~$M$ given by the support
of~$D$.  Without loss of generality, we can assume that~$D_0$ is
connected.  The restriction of the classifying map~$\varphi \co M \to
K$ to~$D_0$ induces a homomorphism~$\phi \co \pi_{1}(D_0) \to \Z_{p}$.
%
%

\begin{lemma} \label{lem:DB}
The cycle~$D$ induces a trivial relative class in the homology of
every metric~$R$--ball~$B$ in~$M$ relative to its boundary, with~$R <
\frac{1}{2} \sys(M)$.  That is,
$$
[D \cap B] = 0 \in H_{2}(B,\partial B;\Z_{p}).
$$
\end{lemma}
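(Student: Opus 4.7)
The plan is to combine the topological triviality of the classifying map restricted to $B$, which comes from the systolic hypothesis, with Poincar\'e--Lefschetz duality on the orientable $3$--manifold $M$. First, I would choose $R$ to be a regular value of the distance function $f(\cdot)=d(x,\cdot)$ from the centre of $B$ (this holds for almost every $R$, and the general case follows by approximation). This makes $B$ a codimension-$0$ submanifold with piecewise smooth boundary $\partial B$, and provides the excision isomorphism $H_{2}(M,M\setminus B;\Z_p)\cong H_{2}(B,\partial B;\Z_p)$ under which $[D\cap B]$ is defined as the image of $[D]\in H_{2}(M;\Z_p)$.

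Next, I would establish that $\varphi|_{B}\co B\to K$ is null-homotopic. By the argument of Lemma~\ref{lem:trivial}, every loop in $B$ is homotopic in $M$ to a concatenation of loops based at $x$, each of length at most $2R+\varepsilon<\sys(M)$, hence each null-homotopic in $M$. Thus the inclusion induces the trivial map $\pi_{1}(B)\to\pi_{1}(M)=\Z_p$, and since $K=K(\Z_p,1)$ is aspherical, the restricted classifying map $\varphi|_{B}$ is null-homotopic.

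To finish, let $\alpha\in H^{1}(K;\Z_p)$ be the degree-one generator from Proposition~\ref{42}. By Proposition~\ref{41} the pullback $\varphi^{*}\alpha$ generates $H^{1}(M;\Z_p)\cong\Z_p$; since $M$ is closed orientable, Poincar\'e duality $PD_{M}\co H^{1}(M;\Z_p)\xrightarrow{\sim}H_{2}(M;\Z_p)$ sends $\varphi^{*}\alpha$ to a generator, so $[D]=c\cdot PD_{M}(\varphi^{*}\alpha)$ for some $c\in\Z_p\setminus\{0\}$. The Poincar\'e--Lefschetz naturality square
\[
\begin{array}{ccc}
H^{1}(M;\Z_p) & \xrightarrow{PD_{M}} & H_{2}(M;\Z_p) \\
i^{*}\downarrow & & \downarrow \\
H^{1}(B;\Z_p) & \xrightarrow{PD_{B,\partial B}} & H_{2}(B,\partial B;\Z_p)
\end{array}
\]
commutes, the right vertical map being induced by excision. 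Since $\varphi|_{B}\simeq\ast$, we have $i^{*}\varphi^{*}\alpha=(\varphi|_{B})^{*}\alpha=0$, and commutativity forces $[D\cap B]=c\cdot PD_{B,\partial B}(0)=0$.

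The main obstacle is verifying the commutativity of this Poincar\'e--Lefschetz naturality square for the inclusion $(B,\partial B)\hookrightarrow(M,M\setminus\mathrm{int}\,B)$. This is a classical cap-product computation, but it requires some care because $B$ is only piecewise smooth rather than smooth, which is why fixing a regular value of the distance function at the outset is essential.
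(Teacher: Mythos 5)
Your argument is correct, and it rests on the same two ingredients as the paper's proof: Poincar\'e--Lefschetz duality for the pair $(B,\partial B)$, and the consequence of Lemma~\ref{lem:trivial} that the inclusion $B\hookrightarrow M$ kills $\pi_1$ (hence $H_1$) when $R<\tfrac12\sys(M)$. The packaging, however, is somewhat different and a bit longer than necessary. The paper argues by contradiction and never mentions the classifying map: if $[D\cap B]\neq 0$, then by the nondegeneracy of the Lefschetz intersection pairing on $(B,\partial B)$ there is a $1$--cycle $c$ in $B$ meeting $D\cap B$ nontrivially; since $c\subset B$, the same count computes the intersection of $D$ with $c$ in $M$; but $c$ is null-homotopic in $M$ by Lemma~\ref{lem:trivial}, so that intersection number must vanish --- contradiction. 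You instead run the duality forward via the naturality square for cap products against the fundamental class, and you interpose the class $\varphi^*\alpha$. Both are legitimate, but the detour through $\varphi^*\alpha$ is avoidable: it suffices to note that with field coefficients $i^*\co H^1(M;\Z_p)\to H^1(B;\Z_p)$ is the vector-space dual of $i_*\co H_1(B;\Z_p)\to H_1(M;\Z_p)$, and the latter vanishes directly by Lemma~\ref{lem:trivial} and Hurewicz; so $i^*PD_M^{-1}[D]=0$ without invoking $\alpha$ or the null-homotopy of $\varphi|_B$. The naturality square you flag as the main obstacle is indeed standard (naturality of cap product plus the fact that $[M]$ restricts to $[B,\partial B]$), and fixing a regular value of the distance function is the right way to make $(B,\partial B)$ a legitimate manifold pair; the paper handles that same regularity point once, earlier, via the semialgebraic discussion.
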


\begin{proof}
Suppose the contrary.  By the Lefschetz-Poincar\'e duality theorem,
the relative~$2$--cycle~$D \cap B$ in~$B$ has a nonzero intersection
with an (absolute)~$1$--cycle~$c$ of~$B$.  Thus, the intersection
between the~$2$--cycle~$D$ and the~$1$--cycle~$c$ is nontrivial
in~$M$.  Now, by Lemma~\ref{lem:trivial}, the~$1$--cycle~$c$ is
homotopically trivial in~$M$.  Hence a contradiction.
\end{proof}

We will exploit the following notion of volume for cycles with torsion
coefficients.

\begin{definition} 
\label{def:Vol}
Let~$D$ be a~$k$--cycle with coefficients in~$\Z_p$ in a Riemannian
manifold~$M$.  We have
\begin{equation}
\label{11}
D= \sum_i n_i \sigma_i
\end{equation}
where each~$\sigma_i$ is a~$k$--simplex, and each~$n_i\in \Z_p^*$ is
assumed nonzero.  We define the notion of~$k$--area~$\area$ for cycles
as in \eqref{11} by setting
\begin{equation}
\label{12}
\area(D)= \sum_i |\sigma_i|,
\end{equation}
where~$|\sigma_i|$ is the~$k$--area induced by the Riemannian metric
of~$M$.
\end{definition}

\begin{remark}
The non-zero coefficients~$n_i$ in \eqref{11} are ignored in defining
this notion of volume.
\end{remark}

\begin{proof}[Proof of Theorem~\ref{theo:ball}]
We continue the proof of Theorem~\ref{theo:ball} when the fundamental
group of~$M$ is isomorphic to~$\Z_p$, with~$p$ an odd prime.  We will
use the notation introduced earlier.  Suppose now that~$D$ is a
piecewise smooth~$2$--cycle area minimizing in its homology
class~$[D]\not=0\in H_2(M;\Z_p)$ up to an arbitrarily small error
term~$\varepsilon>0$, for the notion of volume (area) as defined
in~\eqref{12}.


Recall that~$\phi \co \pi_1(D_0)\to \Z_p$ is the homomorphism induced
by the restriction of the classifying map~$\varphi \co K \to M$ to the
support~$D_0$ of~$D$.  By Proposition~\ref{33}, the~$2$--complex~$D_0$
is~$\phi$--essential.  Thus, by hypothesis of Theorem~\ref{theo:ball},
we can choose a point~$x \in D_0$ satisfying
the~$\rm{GG}_C$-property~\eqref{eq:ball}, i.e., the area of~$R$--balls
in~$D_0$ centered at~$x$ grows at least as~$C R^2$ for~$R <
\frac{1}{2} \sys(D_0,\phi)$.
%
%
Therefore, the intersection of~$D_0$ with the~$R$--balls of$M$
centered at~$x$ satisfies
\begin{equation}
\label{111}
\area(D_0\cap B(x,R)) \geq CR^2
\end{equation}
for every~$R < \frac{1}{2} \sys(D_0,\phi)$.
The idea of the proof is to control the area of distance spheres
(level surfaces of the distance function) in~$M$, in terms of the
areas of the distance disks in~$D_0$.

Let~$B=B(x,R)$ be the metric~$R$--ball in~$M$ centered at~$x$
with~$R<\frac{1}{2} \sys(M)$.  We subdivide and slightly perturb~$D$
first, to make sure that~$D \cap \bar B$ is a subchain of~$D$.  Write
\[
D=D_- + D_+,
\]
where~$D_-$ is a relative~$2$--cycle of~$\bar B$, and~$D_+$ is a
relative~$2$--cycle of~$M\setminus B$.  By Lemma~\ref{lem:DB},~$D_-$
is homologous to a~$2$--chain~$\mathcal{C}$ contained in the distance
sphere~$\partial B = S(x,R)$ with
\[
\partial \mathcal{C} = \partial D_- = - \partial D_+.
\]
We subdivide and perturb~$\mathcal{C}$ in~$S(x,R)$ so that the
interiors of its~$2$--simplices either agree or have an empty
intersection.  Here the simplices of the~$2$--chain~$\mathcal{C}$ may
have nontrivial multiplicities.
%
%
Such multiplicities necessarily affect the volume of a chain if one
works with integer coefficients.
%
%
However, these multiplicities are ignored for the notion
of~$2$--volume~\eqref{12}.  This special feature allows us to derive
the following: the~$2$--volume~\eqref{12} of the chain~$\mathcal{C}$
is a lower bound for the usual area of the distance sphere~$S(x,R)$.

Note that the homology class~$[\mathcal{C}+D_+]=[D] \in H_2(M;\Z_p)$
stays the same.  We chose~$D$ to be area minimizing up
to~$\varepsilon$ in its homology class in~$M$ for the notion of
volume~\eqref{12}.  Hence we have the following bound:
\begin{equation}
\label{112}
\area(S(x,R)) \geq \area(\mathcal{C}) \geq \area(D_-) - \varepsilon
\geq \area (D_0 \cap B) - \varepsilon.
\end{equation}
Now, clearly~$\sys(M) \leq \sys(D_0,\phi)$.  Combining the
estimates~\eqref{111} and~\eqref{112}, we obtain
\begin{equation}
\label{113}
\area ( S(x,R)) \geq C R^2 - \varepsilon
\end{equation}
for every~$R<\frac{1}{2} \sys(M)$.  Integrating the
estimate~\eqref{113} with respect to~$R$ and letting~$\varepsilon$ go
to zero, we obtain a lower bound of~$\frac{C}{3} R^3$ for
the~$3$--volume of some~$R$--ball in the closed manifold~$M$, proving
Theorem~\ref{theo:ball} for closed~$3$--manifolds with fundamental
group~$\Z_{p}$. \\

Suppose now that~$M$ is a closed~$3$--manifold with finite (nontrivial)
fundamental group.  Choose a prime~$p$ dividing the
order~$|\pi_1(M)|$ and consider a cover~$N$ of~$M$ with fundamental group cyclic
of order~$p$.
This cover satisfies~$\sys(N) \geq \sys(M)$, and we apply the
previous argument to~$N$.

Note that the reduction to a cover could not have been done in the
context of M.~Gromov's formulation of the inequality in terms of the
global volume of the manifold.  Meanwhile, in our formulation using a
metric ball, following L.~Guth, we can project injectively the ball of
sufficient volume, from the cover to the original manifold.  Namely,
the proof above exhibits a point~$x \in N$ such that the volume of
the~$R$--ball~$B(x,R)$ centered at~$x$ is at least~$\frac{C}{3} R^3$
for every~$R < \frac{1}{2} \sys(M)$.  Since~$R$ is less than half the
systole of~$M$, the ball~$B(x,R)$ of~$N$ projects injectively to an
$R$--ball in~$M$ of the required volume, completing the proof of
Theorem~\ref{theo:ball}.
\end{proof}




\begin{thebibliography}{AI}



\bibitem{AK} Ambrosio, L.; Katz, M.: Flat currents modulo~$p$ in
metric spaces and filling radius inequalities, {\em
Comm. Math. Helv.\/}, to appear.  See arXiv:1004.1374


\bibitem{AW} Ambrosio, L.; Wenger, S.: Rectifiability of flat chains
in Banach spaces with coefficients in~$\Z_p$.  {\em Mathematische
Zeitschrift\/} (online first).  See arXiv:0905.3372


\bibitem{BB10} Babenko, I.; Balacheff, F.: Distribution of the
systolic volume of homology classes.  See arXiv:1009.2835.


\bibitem{Bal08} Balacheff, F.: A local optimal diastolic inequality on
the two-sphere.  {\em J. Topol. Anal.\/} \textbf{2} (2010), no. 1,
109--121.  See arXiv:0811.0330.


\bibitem{e7} Bangert, V; Katz, M.; Shnider, S.; Weinberger, S.:~$E_7$,
Wirtinger inequalities, Cayley~$4$--form, and homotopy.  {\em Duke
Math. J.} \textbf{146} (2009), no.~1, 35--70.  See
arXiv:math.DG/0608006


\bibitem{BT} Belolipetsky, M.; Thomson. S.: Systoles of Hyperbolic
Manifolds.  See arXiv:1008.2646


\bibitem{Be08} Berger, M.: What is... a Systole? {\em Notices of the
AMS\/} \textbf{55} (2008), no.~3, 374--376.


\bibitem{Bru} Brunnbauer, M.: Homological invariance for asymptotic
invariants and systolic inequalities.  {\em Geometric and Functional
Analysis (GAFA)}, \textbf{18} (2008), no.~4, 1087--1117.  See
arXiv:math.GT/0702789


\bibitem{Bru2} Brunnbauer, M.: Filling inequalities do not depend on
topology.  {\em J. Reine Angew. Math.} \textbf{624} (2008), 217--231.
See arXiv:0706.2790


\bibitem{Bru3} Brunnbauer M.: On manifolds satisfying stable systolic
inequalities.  {\em Math. Annalen\/} \textbf{342} (2008), no.~4,
951--968.  See arXiv:0708.2589


\bibitem{BZ} Burago, Y.; Zalgaller, V.: Geometric inequalities,
Springer, 1988.


\bibitem{BCR98} Bochnak, J.; Coste, M.; Roy, M.-F.: Real algebraic
geometry.  Ergebnisse der Mathematik und ihrer Grenzgebiete (3), 36,
Springer-Verlag, 1998.


\bibitem{DKR} Dranishnikov, A.; Katz, M.; Rudyak, Y.: Small values of
the Lusternik-Schni\-rel\-mann category for manifolds.  {\em Geometry
and Topology\/} \textbf{12} (2008), 1711--1727.  See arXiv:0805.1527


\bibitem{DR09} Dranishnikov, A.; Rudyak, Y.: Stable systolic category
of manifolds and the cup-length.  {\em Journal Journal of Fixed Point
Theory and Applications\/} \textbf{6} (2009), no.~1, 165-177.



\bibitem{Elm10} Elmir, C.: The systolic constant of orientable
Bieberbach 3-manifolds.  See arXiv:0912.3894.

\bibitem{EL} Elmir, C.; Lafontaine, J.: Sur la g\'eom\'etrie
systolique des vari\'et\'es de Bieberbach.  {\em Geometriae
Dedicata\/} \textbf{136} (2008), no.~1, 95--110.  See arXiv:0804.1419


\bibitem{Gr1} Gromov, M.: Filling Riemannian manifolds.  {\em
J. Diff. Geom.} \textbf{18} (1983), 1--147.


\bibitem{Gu11} Guth, L.: Volumes of balls in large Riemannian
manifolds.  {\em Annals of Mathematics\/}, to appear.  See
arXiv:math/0610212


\bibitem{Gu09} Guth, L.: Systolic inequalities and minimal
hypersurfaces.  {\em Geometric and Functional Analysis\/}, \textbf{19}
(2010), no.~6, 1688--1692.  See arXiv:0903.5299


\bibitem{Hat} Hatcher, A.: Algebraic topology.  Cambridge University
Press, 2002.

\bibitem{Hebda} Hebda, J.: Some lower bounds for the area of
surfaces.  {\em Invent. Math.} \textbf{65} (1981/82), no.~3, 485--490.


\bibitem{KK} Katz, Karin Usadi; Katz, M.: Hyperellipticity and Klein
bottle companionship in systolic geometry.  See arXiv:0811.1717


\bibitem{KK2} Katz, Karin Usadi; Katz, M.: Bi-Lipschitz approximation
by finite-di\-men\-sional imbeddings.  {\em Geometriae Dedicata\/}
(online first DOI: 10.1007/s10711-010-9497-4).  See arXiv:0902.3126


\bibitem{SGT} Katz, M.: Systolic geometry and topology.  With an
appendix by Jake P. Solomon.  {\em Mathematical Surveys and
Monographs}, \textbf{137}.  American Mathematical Society, Providence,
RI, 2007.


\bibitem{Ka4} Katz, M.: Systolic inequalities and Massey products in
simply-con\-nected manifolds.  {\em Israel J. Math.}  \textbf{164}
(2008), 381--395.  arXiv:math.DG/0604012


\bibitem{KR2} Katz, M.; Rudyak, Y.: Bounding volume by systoles of
$3$--manifolds.  {\em J. Lond. Math. Soc.\/} \textbf{78} (2008), no~2,
407--417.  See arXiv:math.DG/0504008


\bibitem{KRS} Katz, M.; Rudyak, Y.; Sabourau, S.: Systoles of
$2$--complexes, Reeb graph, and Grushko decomposition.  {\em
International Math. Research Notices\/} \textbf{2006} (2006).  Art. ID
54936, pp. 1--30.  See arXiv:math.DG/0602009


\bibitem{KSh} Katz, M.; Shnider, S.: Cayley 4-form comass and triality
isomorphisms.  {\em Israel J. Math.\/} \textbf{178} (2010), 187-208.
See arXiv:0801.0283


\bibitem{MS} Morgan, John W.; Sullivan, Dennis P.: The transversality
characteristic class and linking cycles in surgery theory.  {\em
Ann. of Math. (2)} \textbf{99} (1974), 463--544.

\bibitem{NR} Nabutovsky, A.; Rotman, R.: The length of the second
shortest geodesic.  {\em Comment. Math. Helv.\/} \textbf{84} (2009),
no.~4, 747--755.

\bibitem{Par10} Parlier, H.: The homology systole of hyperbolic
Riemann surfaces, preprint.  See arXiv:1010.0358.

\bibitem{Pu} Pu, P.M.: Some inequalities in certain nonorientable
Riemannian manifolds.  {\it Pacific J. Math.\/} \textbf{2} (1952),
55--71.

\bibitem{Ro} Rotman, R.: The length of a shortest geodesic loop at a
point.  {\em J. Differential Geom.\/} \textbf{78} (2008), no.~3,
497--519.


\bibitem{RS08} Rudyak, Y.; Sabourau, S.: Systolic invariants of groups
and~$2$--complexes via Grushko decomposition.  {\em Ann. Inst. Fourier}
\textbf{58} (2008), no.~3, 777--800.  See arXiv:math.DG/0609379


\bibitem{Sa08} Sabourau, S.: Asymptotic bounds for separating systoles
on surfaces.  {\em Comment. Math. Helv.} \textbf{83} (2008), no.~1,
35--54.


\bibitem{Sa10} Sabourau, S.: Local extremality of the Calabi-Croke
sphere for the length of the shortest closed geodesic.  {\em J. London
Math. Soc} (2010), online first doi: 10.1112/jlms/jdq045.  See
arXiv:0907.2223.


\bibitem{SY78} Schoen, R.; Yau, S.: Incompressible minimal surfaces,
three-dimensional manifolds with nonnegative scalar curvature, and the
positive mass conjecture in general relativity.  {\em
Proc. Nat. Acad. Sci. U.S.A.\/} \textbf{75} (1978), no.~6, 2567.

\bibitem{SY79} Schoen, R.; Yau, S.: On the structure of manifolds with
positive scalar curvature.  {\em Manuscripta Math.\/} \textbf{28}
(1979), no.~1-3, 159--183.

\bibitem{Su} Sullivan, D. P.: Triangulating and smoothing homotopy
equivalences and homeomorphisms. Geometric Topology Seminar Notes. The
Hauptvermutung book, 69--103,~$K$-Monogr. Math., 1, Kluwer
Acad. Publ., Dordrecht, 1996.



\bibitem{wen} Wenger, S.: A short proof of Gromov's filling
inequality.  {\em Proc. Amer. Math. Soc.} \textbf{136} (2008), no.~8,
2937--2941.  


\bibitem{Wh2} White, B.:  The deformation theorem for flat chains.
{\em Acta Math.} \textbf{183} (1999), no. 2, 255--271.

\end{thebibliography}
 \end{document}